\newtheorem{thm}{Theorem}[section]
\newtheorem*{thmA}{Theorem A}
\newtheorem*{corB}{Corollary B}
\newtheorem{cor}[thm]{Corollary}
\newtheorem{lem}[thm]{Lemma}
\newtheorem{prop}[thm]{Proposition}
\theoremstyle{definition}
\newtheorem{defn}[thm]{Definition}
\newtheorem*{quesG1}{Group-theoretic Question 1}
\newtheorem*{quesG2}{Group-theoretic Question 2}
\newtheorem*{quesA}{Arithmetic Question}
\theoremstyle{remark}
\numberwithin{equation}{section}
\newcommand{\dQ}{\mathbb{Q}}
\newcommand{\cB}{\mathcal{B}}
\newcommand{\fp}{\frak{p}}
\long\def\forget#1\forgotten{}
\begin{document}

\author{Daniela Bubboloni}
\address{Department of Economics and Management, University of Firenze, via delle Pandette 9, 50127 Firenze, Italy}
\email{daniela.bubboloni@unifi.it}
%\thanks{}

\author{Jack Sonn}
\address{
Department of Mathematics\\
Technion --- Israel Institute of Technology\\
Haifa, 32000\\
Israel }
\email{sonn@math.technion.ac.il}
%\thanks{}

\title[ ]{Intersective $S_n$ polynomials with few irreducible factors}
\date{\today}
\keywords{symmetric group, Galois group, decomposition group}
\subjclass[2000]{Primary 11R32; Galois theory}
%11A05; Secondary 11R47,11N37}

%16S35} %Twisted and skew group rings, crossed products
\maketitle
\begin{abstract}
An intersective  polynomial is a monic polynomial in one variable with rational integer coefficients, with
  no rational root and having a root modulo $m$ for all positive integers $m$.
 Let $G$ be a finite noncyclic group and let $r(G)$ be the smallest number of irreducible factors of an intersective
 polynomial with Galois group $G$ over $\dQ$.
Let $s(G)$ be smallest number of proper subgroups of $G$ having the property that the union of their conjugates is $G$ and the intersection of all their conjugates is trivial. It is known that $s(G)\leq r(G).$
It is also known that if $G$ is realizable as a Galois group over the rationals, then it is also realizable  as the Galois group of an intersective polynomial.  However it is not known, in general, whether there exists such a polynomial  which is a product of the smallest feasible number $s(G)$ of irreducible factors. In this paper, we study the case $G=S_n$, the symmetric group on $n$ letters. %, $n>4$.
  We prove that for every $n$, either $r(S_n)=s(S_n)$ or $r(S_n)=s(S_n)+1$ and that the optimal value $s(S_n)$ is indeed attained  for all odd $n$ and for some even $n$. Moreover, we compute $r(S_n)$ when $n$ is the product of at most two odd primes and we give general upper and lower bounds for $r(S_n).$
\end{abstract}
\section{Introduction}\label{sec:intro} \rm \vskip 2em
  An \it intersective \rm polynomial is a monic polynomial in one variable with rational integer coefficients, with
  no rational root and having a root modulo $m$ for all positive integers $m$, or equivalently, having a root in
$\dQ_p$ for all (finite) $p$.
 Let $G$ be a finite noncyclic group and let $r(G)$ be the smallest number of irreducible factors of an intersective
 polynomial with Galois group $G$ over $\dQ$. There is a group-theoretically defined lower bound for $r(G)$, given by the
 smallest number $s(G)$ of proper subgroups of $G$ having the property that the union of the
 conjugates of those subgroups is $G$ and their intersection is trivial.  This follows from
 \vskip 0.5em
 \begin{prop} \label{thm:char} {\em (\cite [Prop. 2.1]{so})}
%\proclaim {Proposition \label{char}}
Let $K/\dQ$ be a finite
Galois extension with Galois group $G$.  The following are
equivalent:

(1) \ \ $K$ is the splitting field of a product $f=g_1
\cdots g_m$ of $m$ irreducible polynomials of degree greater than
$1$ in $\dQ[x]$ and $f$ has a root in $\dQ_p$ for all (finite) primes $p$.

(2) \ \  $G$ is the union of the conjugates of $m$ proper
subgroups $A_1,...,A_m$, the intersection of all these conjugates
is trivial, and for all (finite) primes $\fp$ of $K$, the decomposition
group $G(\fp)$ is contained in a conjugate of some $A_i$.
\end{prop}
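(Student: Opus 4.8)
The plan is to translate statement (1) into the group-theoretic language of (2) by means of the Galois correspondence for $K/\dQ$, and then to check that the dictionary runs in both directions. I would set up the basic correspondence first. Given an irreducible factor $g_i$ of $f$ of degree $>1$, fix a root $\alpha_i\in K$ and put $A_i=\Gal(K/\dQ(\alpha_i))$; then $A_i$ is proper since $\deg g_i=[\dQ(\alpha_i):\dQ]=[G:A_i]>1$, the roots of $g_i$ are exactly the $\sigma\alpha_i$ with stabilizers $\sigma A_i\sigma^{-1}$, and the splitting field of $f$ is the fixed field of $\bigcap_i\bigcap_{\sigma\in G}\sigma A_i\sigma^{-1}$. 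Hence ``$K$ is the splitting field of $f$'' is precisely ``the intersection of all conjugates of the $A_i$ is trivial.'' Conversely, starting from subgroups $A_i$ as in (2), I would take $\alpha_i$ to be a primitive element of the fixed field $K^{A_i}$ and let $g_i$ be its minimal polynomial, so that the same correspondence runs backwards.

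The arithmetic heart of the equivalence is the local criterion: for $g_i$ with stabilizer $A_i$ and a prime $\fp$ of $K$ above $p$, the factor $g_i$ has a root in $\dQ_p$ if and only if the decomposition group $G(\fp)$ lies in a conjugate of $A_i$. I would prove this by letting $G(\fp)$ act on the root set $\{\sigma\alpha_i\}\cong G/A_i$: the orbits correspond to the primes of $\dQ(\alpha_i)$ above $p$, the orbit through $\sigma A_i$ has size $[G(\fp):G(\fp)\cap\sigma A_i\sigma^{-1}]$, equal to the local degree $[\dQ(\alpha_i)_{\mathfrak{q}}:\dQ_p]$ of the corresponding prime $\mathfrak{q}$, and $g_i$ acquires a root in $\dQ_p$ exactly when some orbit is a singleton, i.e.\ $G(\fp)\subseteq\sigma A_i\sigma^{-1}$. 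Since the groups $G(\fp)$ for varying $\fp\mid p$ are conjugate, the condition is well defined on $p$, and summing over $i$ shows that $f$ has a root in $\dQ_p$ for all $p$ if and only if every $G(\fp)$ lies in a conjugate of some $A_i$ --- the last clause of (2).

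It remains to account for the clause that $G$ is the union of the conjugates of the $A_i$. In the direction $(1)\Rightarrow(2)$ this is where I would invoke the Chebotarev density theorem: every $g\in G$ occurs as a Frobenius, so $\langle g\rangle=G(\fp)$ for some unramified $\fp$, and by the local criterion $\langle g\rangle$, hence $g$, lies in a conjugate of some $A_i$; thus $G=\bigcup_i\bigcup_{\sigma}\sigma A_i\sigma^{-1}$. In the direction $(2)\Rightarrow(1)$ the union clause is automatic once the local clause holds (again by Chebotarev applied to the fixed field $K$) and is not otherwise needed to build $f$; it is recorded in (2) because it is the clean necessary group-theoretic condition isolating $s(G)$. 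Assembling properness, trivial intersection, the local criterion, and Chebotarev then yields both implications.

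I expect the main obstacle to be the precise bookkeeping in the local criterion: identifying the orbits of the decomposition group on the roots with the primes of $\dQ(\alpha_i)$ above $p$ and matching orbit sizes to local degrees, so that ``a root in $\dQ_p$'' is faithfully rendered as ``a fixed point of $G(\fp)$'' and hence ``$G(\fp)$ inside a conjugate of $A_i$.'' Once this lemma is in hand, everything else is a routine application of the Galois correspondence together with a single appeal to Chebotarev for the union clause.
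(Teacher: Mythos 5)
Your argument is correct and is the standard one: the Galois-correspondence dictionary $g_i\leftrightarrow A_i=\Gal(K/\dQ(\alpha_i))$, the local criterion identifying ``$g_i$ has a root in $\dQ_p$'' with ``$G(\fp)$ fixes a root, i.e.\ lies in a conjugate of $A_i$,'' and Chebotarev to supply (and show the redundancy of) the covering clause. The paper itself gives no proof of this proposition --- it is quoted verbatim from \cite[Prop.~2.1]{so} --- and your proof is essentially the argument of that cited source, so there is nothing to add.
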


Recall that the decomposition group $G(\fp)$ is the stabilizer in $G$ of the prime $\fp$. Note also that we have  $2\leq s(G)\leq r(G)$, the first inequality holding because no group is the union of the conjugates of a single proper subgroup. The second inequality holds by definition.

It is natural to ask for which $G$, realizable over $\dQ$, is $s(G)=r(G)$?  This paper focuses on this question for the symmetric groups $G=S_n$ of degree $n\geq 3$, which are well known to be realizable over $\dQ$. We view $S_n$ as naturally acting on the set $\Omega=\{1,\dots,n\}.$

If we drop the trivial intersection
 condition in the definition of $s(G)$, we obtain the {\it normal covering number} of $G$, denoted
 by $\gamma(G)$ in the group theory literature. We recall some terminology from  \cite{bp}.
  If $H_1,\ldots,H_l$, with $l\in\mathbb{N}$,
are pairwise non-conjugate proper subgroups of $G$ such that
$G=\bigcup_{g\in G} \bigcup_ {i=1}^lH_i^g,$
we say that $\Delta=\{H_i^g\,|\,1\leq i\leq l, g\in G\}$ is a {\em
 normal covering} of $G$ and that $\delta=\{H_1,\dots,H_l\}$ is a
{\em basic set} for $G$ generating $\Delta.$ We call the elements
of $\Delta$ the {\em components}  and
the elements of $\delta$ the {\em basic components} of the normal covering $\Delta$.
The minimum cardinality $\gamma(G)$ of a basic set
is called the normal covering number of $G$.
Recall that $\gamma(G)\geq 2$ and note that $\gamma(G)$ can be seen as the minimum number of proper subgroups of $G$ such that every cyclic subgroup of $G$ lies in some conjugate of one of them.
By definition, we have $\gamma(G)\leq s(G)$. On the other hand,  if the trivial intersection condition does not hold for a set of
 subgroups whose conjugates cover $G$, adding the trivial subgroup restores the trivial intersection property and thus, for every finite group $G$, we have $s(G)\in \{\gamma(G),\ \gamma(G)+1\}$. In particular, for the symmetric groups, it is easily seen that
 $s(S_n)=\gamma(S_n)$ (Lemma \ref{s-gamma}).  We then accordingly ask whether, for every $n\geq 3,$
 $\gamma(S_n)=r(S_n)$.

 Fortunately, much is already known about $\gamma(S_n)$.  In \cite[Theorem 1.1] {JA}, it is
 proved that $\gamma$ grows linearly with $n$, in the sense that there exists $k\in \mathbb{R}$, with   $0<k\leq 2/3$, such that
 \begin{equation}\label{linear} kn \leq \gamma(S_n) \leq 2n/3
 \end{equation}
 for all $n\geq 3$.
 In \cite {bp} exact values of $\gamma(S_n)$ are
 given for all $n$ odd and divisible by at most two distinct
 primes.  It has been shown for the case $\gamma(S_n)=2$, which holds
 exactly for $3\leq n \leq 6$, that there exist Galois realizations for
 which $r(S_n)=2$ \cite {rs}.  The present paper  gives for the first time an infinite set of $n$ for
 which $r(S_n)=\gamma(S_n)$. In fact, we show that this holds for all odd $n$ (Proposition \ref{odd}).

 To state our first main result, we  need to introduce a class of metacyclic subgroups  of $S_n$, which are in fact abelian on two generators.  For $m\in \mathbb{N},$ denote by $C_m$ the cyclic group of order $m.$ Let $M$ be a subgroup  of $S_n$ of the form $C_{2m}\times C_2$, for some $m\in\mathbb{N}$, with $C_2$ generated by a transposition $\tau=(i\ j)$. We call $M$ a {\it special metacyclic} subgroup  of $S_n$ and denote by $\mathcal{M}(S_n)$ the set of special metacyclic subgroups  of $S_n$. %Note that if $M\in  \mathcal{M}(S_n)$, then $M$ is not cyclic because it admits more than one subgroup of order $2$.
 We claim that the factor $C_{2m}$ in $M\in \mathcal{M}(S_n)$ can be chosen generated by $\sigma\in S_n$ such that $\sigma(i)=i$ and $\sigma(j)=j$. 
Indeed, let $C_{2m}=\langle \psi\rangle$, $C_2= \langle \tau\rangle$ and $M=\langle \psi\rangle\times \langle \tau\rangle$. Note that $|M|=4m.$
Then $C_{2m}=\langle \psi\rangle$ is contained in the centralizer of $\tau$ in $S_n$, which is the direct product of  $\langle\tau\rangle$ and the subgroup $U$ of $S_n$ fixing $i$ and $j$. In particular, we have $\psi=\sigma$ or $\psi=\sigma\tau$, for some $\sigma\in U.$ In the first case our claim is obvious. In the second case, we have $M=\langle \sigma\tau\rangle \times \langle \tau\rangle=\langle \sigma\rangle \times \langle \tau\rangle$ and $4m=2|\sigma|$ gives $|\sigma|=2m,$
 so that $\sigma$ generates a cyclic group of order $2m.$
Throughout the paper the two generator $\sigma$ and $\tau$ of $M$ will be always chosen such that  $\tau=(i\ j)$, $\sigma(i)=i$ and $\sigma(j)=j$.

 \begin{thmA} \label{thm:main} For any $n$, the symmetric group $S_n$  is realizable infinitely
often as a Galois group over $\dQ$, with all decomposition groups either cyclic or special metacyclic of the form $C_{2m}\times C_2$,
where the factor $C_2$ generated by a transposition is the inertia group.  The factor $C_{2m}$ can be chosen so that it fixes the two letters moved by the transposition.
 \end{thmA}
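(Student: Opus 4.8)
The plan is to reduce Theorem A to the existence of infinitely many monic irreducible $f\in\dZ[x]$ of degree $n$ with odd squarefree discriminant, and then to read the shape of the decomposition groups off from tameness. First I would check that such an $f$ does the job at the global level. Let $\alpha$ be a root of $f$, $K=\dQ(\alpha)$, and $L$ the splitting field, with $G=\Gal(L/\dQ)\leq S_n$. Since $\operatorname{disc}(f)=[\cO_K:\dZ[\alpha]]^2\operatorname{disc}(\cO_K)$ is squarefree, the index is $1$, so $\cO_K=\dZ[\alpha]$ and $\operatorname{disc}(\cO_K)=\operatorname{disc}(f)$ is odd and squarefree. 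Hence every ramified prime $p$ is odd with $v_p(\operatorname{disc}(\cO_K))=1$; such a $p$ is tamely ramified, exactly one prime of $K$ above it has $e=2$, $f=1$, and the inertia group $I(\fp)$ in $L$ is generated by a single transposition $\tau=(i\ j)$. Because $\dQ$ has no nontrivial unramified extension, the inertia subgroups generate $G$; and since $f$ is irreducible $G$ is transitive. A transitive subgroup of $S_n$ generated by transpositions equals $S_n$, so $G=S_n$.

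Granting such an $f$, the local computation is short. At a prime of $L$ unramified over $\dQ$ the decomposition group is cyclic, generated by a Frobenius. At a ramified prime $\fp$, tameness gives $\phi\tau\phi^{-1}=\tau^{q}$ for a Frobenius lift $\phi$ and an odd prime power $q$; as $\tau^2=1$ this forces $\phi\tau\phi^{-1}=\tau$, so $\tau$ is central and $G(\fp)$ is abelian. Writing $d$ for the residue degree we have $|I(\fp)|=2$, $|G(\fp)|=2d$ and $G(\fp)/I(\fp)\cong C_d$; an abelian group of order $2d$ with central $C_2=\langle\tau\rangle$ and cyclic quotient $C_d$ is either cyclic or isomorphic to $C_d\times\langle\tau\rangle$ with $d$ even. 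When $d$ is odd both descriptions give a cyclic group; when $d$ is even and $G(\fp)$ is non-cyclic it has the special metacyclic form $C_{2m}\times C_2$ with $2m=d$ and $C_2=\langle\tau\rangle=I(\fp)$. In this last case the claim proved above lets me choose the generator of the $C_{2m}$ factor to be a $\sigma$ fixing $i$ and $j$. Thus every decomposition group is cyclic or special metacyclic of the stated form, with the transposition generating the inertia group; letting $\operatorname{disc}(f)$ have arbitrarily large prime factors yields infinitely many distinct such realizations of $S_n$.

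The main obstacle is producing the input: infinitely many irreducible monic $f\in\dZ[x]$ of degree $n$ with odd squarefree discriminant. I would derive these from a construction of Kedlaya, which yields for every $n$ infinitely many monic degree-$n$ integer polynomials of squarefree discriminant and which accepts prescribed congruence conditions on the coefficients. Requiring $f\bmod 2$ to be separable keeps the discriminant odd and $2$ unramified, so that all ramification is tame and every inertia group is a transposition; requiring $f$ to be irreducible modulo one auxiliary prime $\ell\nmid\operatorname{disc}(f)$ forces $G$ to contain an $n$-cycle and hence to be transitive. The genuinely hard point is arranging squarefreeness of $\operatorname{disc}(f)$ --- a fixed polynomial in the coefficients of $f$ --- simultaneously with these congruences, and this squarefree-sieve input is the step I expect to carry the real weight of the argument.
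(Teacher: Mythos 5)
Your proposal is correct and follows essentially the same route as the paper: both rest on Kedlaya's construction of irreducible polynomials with squarefree discriminant avoiding prescribed primes, deduce that the Galois group is $S_n$ with inertia at each ramified prime generated by a transposition (the paper cites Kondo for this where you re-derive it), and then use tameness to show the decomposition group is abelian and splits as a cyclic group times the inertia $\langle\tau\rangle$. The only cosmetic difference is how tameness is secured --- you use that a ramified prime is odd with $e=2$ (discriminant valuation $1$), while the paper excludes all primes $p\leq n$ from the discriminant so that $p\nmid |S_n|$ --- and both hinge on the same squarefree-sieve input that you correctly identify as carrying the real weight.
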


Theorem A suggests the definition of a further useful parameter  $\gamma'(S_n).$   We call a basic set $\delta'$ {\it special} if
every subgroup of $S_n$ which is either cyclic or special metacyclic, is contained in a conjugate of a component in $\delta'$. If $\delta'=\{H_1,\dots, H_l\}$ is a special basic set,  we call the corresponding covering $\Delta'=\{H_i^g\,|\,1\leq i\leq l, g\in S_n\}$  a {\em special
  normal covering} of $S_n.$
The minimum cardinality of a  special basic set for $G$
is called the \emph{special normal covering number} of $S_n$ and denoted by $\gamma'(S_n);$
a special basic set of size $\gamma'(S_n)$ is called a {\it minimal special basic set}.
Note that if $n=3$, no special metacyclic subgroup exists and thus,  $\gamma'(S_3)=\gamma(S_3)=2.$

For $x\in \mathbb{N}$, with $1\leq x\leq n/2$, consider the intransitive subgroup of $S_n$, defined by $P_x=\{\psi\in S_n : \psi(\{1,\dots,x\})=\{1,\dots,x\}\}.$  Clearly, if $X\subseteq \Omega$ has size $c\in\mathbb{N}$, for some $ c<n,$
and $G=\{\psi\in S_n : \psi(X)=X\}$, then there exists $g\in S_n$ such that $G=P_x^g,$ where $x=\min\{c,n-c\}$.  Moreover, the set of maximal subgroups of $S_n$ which are
intransitive is given by the conjugates of the subgroups in
$\mathcal{P}$, where
\begin{equation}\label{P}
\mathcal{P}=\{\ P_x: 1\leq x<n/2\}.
\end{equation}
Recall that, for $n$ even, the intransitive subgroup $P_{n/2}$
is not maximal in $S_n$ because it is properly contained in the maximal imprimitive subgroup $S_{n/2}\wr S_2$ (see Section \ref{basic}).

 Consider $M\in \mathcal{M}(S_n)$,  with generators  $\sigma$ and $\tau=(i\ j)$, so that $\sigma(i)=i$ and $\sigma(j)=j$. Then, for $X=\{i,j\}$, we have  $\psi(X)=X$ for all $\psi\in M$ and so, up to conjugacy, $M$ is contained in $P_2$. It follows that $\gamma(S_n)\leq \gamma'(S_n)\leq \gamma(S_n)+1,$ and $\gamma'(S_n)=\gamma(S_n)$ if there exists a minimal normal
covering of $S_n$ admitting as component $P_2$ or some proper overgroup $H$ of $P_2.$ Note that,  $P_2$ being maximal in $S_n$ for $n\geq 5$, this  last possibility can happen only for $n=4$, through $H=S_2\wr S_2.$
Interestingly, there are examples of $n$ for which $\gamma'(S_n)=\gamma(S_n)$ even though no minimal normal covering of $S_n$ has a component containing $P_2$ (Proposition \ref{10,14}).

\vskip .5em
Combining Proposition 1.1 with Theorem A, we easily obtain the following interesting corollary.

\begin{corB}\label{prop:prop}   Let $n\in \mathbb{N}, n\geq 3.$  Then $2\leq \gamma(S_n)\leq  r(S_n) \leq\gamma'(S_n)$.  In particular,
$r(S_n)$ equals $\gamma(S_n)$ or $\gamma(S_n)+1$, and equals $\gamma(S_n)$ if there exists a minimal normal
covering of $S_n$ which includes $P_2$.
\end{corB}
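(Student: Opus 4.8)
The plan is to establish the chain $2\le \gamma(S_n)\le r(S_n)\le \gamma'(S_n)$ and then read off the two ``in particular'' assertions from it, using the already–recorded bound $\gamma(S_n)\le\gamma'(S_n)\le\gamma(S_n)+1$ and the identity $s(S_n)=\gamma(S_n)$ of Lemma~\ref{s-gamma}. The leftmost inequality $2\le\gamma(S_n)$ is the recalled fact that no group is covered by the conjugates of a single proper subgroup. For the middle inequality I would combine $\gamma(S_n)=s(S_n)$ with the known bound $s(G)\le r(G)$ (which is the content of the implication $(1)\Rightarrow(2)$ of Proposition~\ref{thm:char} applied to an optimal intersective polynomial), obtaining $\gamma(S_n)=s(S_n)\le r(S_n)$ at once.

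The real content is the rightmost inequality $r(S_n)\le\gamma'(S_n)$, and here I would run the implication $(2)\Rightarrow(1)$ of Proposition~\ref{thm:char} on input supplied by Theorem~A. Fix a realization $K/\dQ$ with $\Gal(K/\dQ)\cong S_n$ of the kind produced by Theorem~A, so that every decomposition group $S_n(\fp)$ is cyclic or special metacyclic, and let $\delta'=\{H_1,\dots,H_l\}$ be a minimal special basic set, so $l=\gamma'(S_n)$. I would verify the three clauses of Proposition~\ref{thm:char}(2) for the $H_i$ and this $K$: (a) the conjugates of the $H_i$ cover $S_n$, which is part of $\delta'$ being a basic set; (b) every $S_n(\fp)$ lies in a conjugate of some $H_i$, since each $S_n(\fp)$ is cyclic or special metacyclic and $\delta'$ is, by definition, special; and (c) the intersection of all conjugates of all the $H_i$ is trivial. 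Granting (c), Proposition~\ref{thm:char} produces an intersective polynomial whose splitting field is $K$ (so its Galois group is $S_n$) and which is a product of $l=\gamma'(S_n)$ irreducible factors, each of degree $>1$, so $f$ has no rational root; hence $r(S_n)\le\gamma'(S_n)$.

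The one delicate point — and the main obstacle — is clause (c), since specialness of $\delta'$ does not literally carry a trivial-intersection hypothesis. I would dispose of it through the normal-subgroup structure of $S_n$: the intersection $N=\bigcap_{i}\bigcap_{g\in S_n}H_i^{\,g}$ is the intersection of the normal cores of the $H_i$, hence normal in $S_n$. For $n\ge 5$ the only normal subgroups are $1,A_n,S_n$; it cannot be $S_n$ as the $H_i$ are proper, and it cannot be $A_n$, for then each $H_i$ would equal $A_n$ and the conjugates would miss every transposition, contradicting that $\delta'$ covers all cyclic subgroups. Thus $N=1$ for all $n\ge5$. The residual cases $n\in\{3,4\}$ lie in the range $3\le n\le 6$, where $\gamma(S_n)=2$ and $r(S_n)=2$ is already known from \cite{rs}; since $\gamma(S_n)\le\gamma'(S_n)$, the full chain holds there too.

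Finally I would extract the two consequences. Combining $\gamma(S_n)\le r(S_n)\le\gamma'(S_n)$ with the recorded bound $\gamma'(S_n)\le\gamma(S_n)+1$ pins $r(S_n)$ to $\{\gamma(S_n),\gamma(S_n)+1\}$. And if some minimal normal covering of $S_n$ has $P_2$ (or an overgroup of $P_2$) among its components, that covering is automatically special: it covers every cyclic subgroup, being a covering, and every special metacyclic subgroup, each of which is up to conjugacy contained in $P_2$, as shown before the statement of Theorem~A. This forces $\gamma'(S_n)=\gamma(S_n)$, and the chain then collapses to $r(S_n)=\gamma(S_n)$.
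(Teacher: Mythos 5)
Your proof follows essentially the same route as the paper's: the middle inequality via $s(S_n)=\gamma(S_n)$ together with $s\le r$ from Proposition~\ref{thm:char}, the right-hand inequality by feeding a minimal special basic set and a Theorem~A realization into Proposition~\ref{thm:char}, and the final assertions from $\gamma(S_n)\le\gamma'(S_n)\le\gamma(S_n)+1$ plus the observation that a covering containing $P_2$ is automatically special. The only divergence is that you verify the trivial-intersection clause explicitly and treat $n\in\{3,4\}$ separately; that extra care is sound and in fact slightly more scrupulous than the paper, which delegates this clause to Lemma~\ref{s-gamma} whose stated proof overlooks the normal subgroup $V_4\lhd S_4$ (e.g.\ the basic set $\{A_4,\,S_2\wr S_2\}$ has core intersection $V_4\neq 1$).
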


Theorem A and Corollary B naturally raise two questions.
 \vskip .5em
\begin{quesA}\label{Arithmetic Question}{\em  Is $S_n$ realizable over the rationals $\dQ$
 with all decomposition groups cyclic?}
 \end{quesA}

 \rm The answer to this question appears to be unknown.  When the
 answer is yes for a given $n$, it is immediate also that
 $r(S_n)=\gamma(S_n)$.

\begin{quesG1}\label {Group1}{\em Is $\gamma'(S_n)=\gamma(S_n)$ for
 all $n\in \mathbb{N}, n\geq 3$?}
 \end{quesG1}
An affirmative answer to this question also gives $r(S_n)=\gamma(S_n)$. In Section \ref{5}, we give an affirmative answer for $n$ odd (Proposition \ref{odd}) but the question remains open, in general, for $n$ even. An indication of the complexity for the even case is illustrated by the cases $n=10$ and $n=14,$ which we treat in Section \ref{10}.
Some support for an affirmative answer in the general case might be given by the fact that any known upper bound for $\gamma(S_n)$ holds also for $\gamma'(S_n)$ (Propositions \ref{gamma'-g} and \ref{h-bound}). Moreover, for all the $n$ such that the value of $\gamma(S_n)$ is known, we have $\gamma'(S_n)=\gamma(S_n).$

 We conclude the paper by finding for $r(S_n)$ as well as for $\gamma'(S_n)$ the same linear bounds \eqref{linear} known for $\gamma(S_n)$ (Proposition \ref{bounds}).
 \vskip 1cm

\section{Intersective $S_n$ polynomials with few irreducible factors}\label{sec:one}

We start by noting that for the symmetric group, the parameters $s$ and $\gamma$ coincide:
\begin{lem}\label{s-gamma} If $\delta=\{H_1,\dots, H_k\}$ is a basic  set for $S_n$, then $\underset { \sigma\in S_n}{\cap} \,\overset{k}{\underset{i=1}{\cap}}H_i^{\sigma}=1.$ In particular $\gamma(S_n)=s(S_n).$
\end{lem}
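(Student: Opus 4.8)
The plan is to analyse the normal subgroup
$N:=\bigcap_{\sigma\in S_n}\bigcap_{i=1}^{k}H_i^{\sigma}$
directly. The first, immediate, observation is that $N$ is normal in $S_n$: for every $\tau\in S_n$, conjugation by $\tau$ only permutes the family of conjugates $\{H_i^{\sigma}:1\le i\le k,\ \sigma\in S_n\}$ and hence fixes their intersection, so $N^{\tau}=N$. Taking $\sigma=1$ shows $N\subseteq H_i$ for every $i$, and since each $H_i$ is a proper subgroup, $N$ is a \emph{proper} normal subgroup of $S_n$. The statement is thereby reduced to the classification of the proper normal subgroups of $S_n$, combined with the covering hypothesis $S_n=\bigcup_{\sigma\in S_n}\bigcup_{i=1}^{k}H_i^{\sigma}$, which I will use to eliminate the nontrivial candidates.

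For $n=3$ and for every $n\ge 5$ the only proper normal subgroups of $S_n$ are $1$ and $A_n$, so it remains to exclude $N=A_n$. If $A_n\subseteq N$, then $A_n\subseteq H_i$ for all $i$; as $A_n$ is maximal in $S_n$ while each $H_i$ is proper, this forces $H_i=A_n$ for every $i$, and then $\bigcup_{\sigma}\bigcup_i H_i^{\sigma}=A_n\neq S_n$, contradicting the covering hypothesis. Hence $N=1$ for all such $n$. The final assertion then follows at once: applied to a basic set of minimal size $\gamma(S_n)$, the triviality of $N$ exhibits $\gamma(S_n)$ proper subgroups whose conjugates cover $S_n$ and the intersection of all of whose conjugates is trivial, so $s(S_n)\le\gamma(S_n)$; together with the inequality $\gamma(S_n)\le s(S_n)$ recalled in the Introduction this gives $\gamma(S_n)=s(S_n)$.

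I expect the main obstacle to be the exceptional group $S_4$, which carries one further proper normal subgroup, namely $V=\{1,(1\ 2)(3\ 4),(1\ 3)(2\ 4),(1\ 4)(2\ 3)\}\cong C_2\times C_2$, so that a priori $N\in\{1,V,A_4\}$. The case $N=A_4$ is discarded exactly as above, and since $V$ is normal one has $N\supseteq V$ precisely when $V\subseteq H_i$ for every $i$; thus the whole question comes down to deciding when a basic set can have \emph{all} of its components containing $V$. To settle this I would pass to the quotient $S_4/V\cong S_3$: the subgroups of $S_4$ containing $V$ are exactly $V$, the three Sylow $2$-subgroups, and $A_4$, corresponding under the quotient map to $1$, the three subgroups of order $2$, and $A_3$ in $S_3$; and since each such $H_i$ is the full preimage of its image, a basic set with all components containing $V$ covers $S_4$ if and only if the images form a normal covering of $S_3$. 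This reduces the entire case $n=4$ to a short verification inside $S_3$, which is the only step not handled uniformly by the normality of $N$ and the maximality of $A_n$, and which confirms in particular the equality $\gamma(S_4)=s(S_4)$.
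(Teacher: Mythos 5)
For $n=3$ and $n\ge 5$ your argument is correct and is essentially the paper's: the core $N$ is a proper normal subgroup, and $N=A_n$ is excluded because it would force every $H_i$ to equal $A_n$, whose conjugates do not cover $S_n$. Where you genuinely depart from the paper is at $n=4$, and your caution there is well placed — but your plan does not close the case, and the deferred ``short verification inside $S_3$'' in fact comes out the \emph{opposite} way from what you assert. The images under $S_4\to S_4/V\cong S_3$ of the proper subgroups containing $V$ are $1$, the three subgroups of order $2$, and $A_3$, and $\{\langle(1\,2)\rangle,\,A_3\}$ \emph{is} a normal covering of $S_3$: the conjugates of a single $C_2$ generated by a transposition sweep out all three transpositions, and $A_3$ supplies the $3$-cycles. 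Lifting back, $\delta=\{S_2\wr S_2,\ A_4\}$ is a basic set for $S_4$ (the Sylow $2$-subgroup $S_2\wr S_2$ contains permutations of types $[1^2,2]$, $[2^2]$ and $[4]$, while $A_4$ contains type $[1,3]$), all of whose components contain $V$; the intersection of all conjugates of its components is then exactly $V\neq 1$. So the displayed assertion of the lemma is actually \emph{false} for $n=4$, and your expectation that the $S_3$ check ``confirms'' it is the gap: no verification can confirm a false statement, and as written your proof of the first assertion cannot be completed.

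It is worth recording that the paper's own proof commits precisely the oversight you were guarding against: it asserts that a nontrivial $K\lhd S_n$ must equal $A_n$, ignoring the Klein subgroup when $n=4$, so the published lemma needs the same repair there. The consequence $\gamma(S_4)=s(S_4)$ nevertheless survives, but by a different route than ``every basic set has the trivial-intersection property'': it suffices to exhibit \emph{some} minimal basic set with trivial core-intersection, e.g.\ $\{S_2\wr S_2,\ P_1\}$. This is a basic set (in the paper's notation $P_1\cong S_3$ absorbs the types $[1,3]$ and $[1^2,2]$, while $S_2\wr S_2$ absorbs $[2^2]$ and $[4]$), and since $P_1$ is core-free in $S_4$ — it contains neither $V$ nor $A_4$ — the intersection of all conjugates of its components is trivial, giving $s(S_4)\le 2=\gamma(S_4)$ and hence equality. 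With that substitution your closing derivation of $\gamma(S_n)=s(S_n)$, which for all $n\neq 4$ proceeds exactly as in the paper by applying the triviality statement to a minimal basic set, goes through for every $n\ge 3$.
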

\begin{proof}  Assume that $K=\underset { \sigma\in S_n}{\cap} \,\overset{k}{\underset{i=1}{\cap}}H_i^{\sigma}\neq 1.$ Since  $K\lhd S_n$, the only possibility is $K=A_n.$ Then,  for every $ i\in \{1,\dots,k\}$, we have $A_n\leq H_i<S_n, $ which gives $H_i=A_n$ and thus $\delta=\{A_n\}$, a contradiction.
Next let $\delta$ be a minimal basic set with $\gamma(S_n)$ components. By what is shown above, $\delta$ realises the trivial intersection property, thus $s(S_n)\leq \gamma(S_n)$. As  $s(G)\geq \gamma(G)$ for all  finite groups $G$, the equality $\gamma(S_n)=s(S_n)$  holds.
\end{proof}
\vskip 0.5em

The proof of Theorem A is based on a  construction of Kedlaya  (\cite{ke}) of
 infinitely many Galois realizations of the symmetric groups $S_n$ over $\dQ$ with squarefree discriminants, together
 with an earlier result of Kondo (\cite{ko}).
\begin{thm}\label{kedlaya}{\rm (Kedlaya)}
Let $n>1$ be an integer and let $S$ be a finite set of primes.  Then there exist infinitely many monic irreducible
polynomials $P(x)$ of degree $n$, with integer coefficients, such that the discriminant of $P(x)$ is squarefree and not
divisible by any of the primes in $S$.
\end{thm}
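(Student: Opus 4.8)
The plan is to exhibit $P(x)$ inside the family $P(x)=x^{n}+a_{n-1}x^{n-1}+\cdots+a_{0}$, with the integer coefficients $a_{0},\dots ,a_{n-1}$ restricted by a finite set of congruences, and then to show that within such a congruence class the discriminant is squarefree and coprime to $S$ for infinitely many tuples. Throughout I would regard the discriminant as a single polynomial $\Delta=\Delta(a_{0},\dots ,a_{n-1})\in\mathbb{Z}[a_{0},\dots ,a_{n-1}]$ and use the basic fact that, for every prime $p$, the reduction $\Delta\bmod p$ is not identically zero, since $\mathbb{F}_{p}$ admits separable monic polynomials of every degree $\geq 1$.

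First I would dispose of the primes in $S$. For each $p\in S$ the reduction $\Delta\bmod p$ is a nonzero polynomial over $\mathbb{F}_{p}$, so there is a residue tuple $(a_{0},\dots ,a_{n-1})\bmod p$ making $\Delta$ a unit; fixing such a residue for every $p\in S$ guarantees $p\nmid\Delta$, hence that the eventual discriminant is coprime to $S$ (and in particular not divisible by $p^{2}$ at those primes). Next I would secure irreducibility in a way that does \emph{not} spoil squarefreeness. Eisenstein-type conditions are ruled out, since total ramification would force a factor $q^{\,n-1}$ into the discriminant; instead I would pick a prime $\ell\notin S$ and require $P\bmod\ell$ to be irreducible of degree $n$ over $\mathbb{F}_{\ell}$, which is possible for every $\ell$ and forces $P$ itself to be irreducible over $\mathbb{Q}$. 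Such an $\ell$ is inert, so $\ell\nmid\Delta$, and the condition only cuts out a positive-density congruence class modulo $\ell$. Combining the conditions from the two previous steps by the Chinese Remainder Theorem pins the coefficients down to a single class modulo a fixed modulus $N$, within which $\Delta$ is automatically coprime to $N$.

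The heart of the argument --- and the step I expect to be the main obstacle --- is to produce infinitely many tuples in this class for which $p^{2}\nmid\Delta$ at all the remaining (large) primes $p\nmid N$, i.e.\ to force $\Delta$ squarefree. The temptingly simple route of freezing all but one coefficient and invoking squarefree values of the resulting one-variable polynomial fails here, because $\deg\Delta$ grows with $n$ and unconditional squarefree-value theorems reach only degree three. The resolution is to vary all $n$ coefficients and apply a geometric (Ekedahl-type) squarefree sieve to $\Delta$: the contribution of a large prime $p$ is controlled by the density of the locus $\{\Delta\equiv 0\bmod p^{2}\}$, which at smooth points of $\{\Delta=0\}$ is of size $O(p^{-2})$, while the dangerous singular locus --- consisting of polynomials with a triple root or two double roots --- has codimension two, so its contribution to the tail over $p>M$ is $o(1)$ uniformly in the parameters. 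This yields a positive proportion of admissible tuples with squarefree $\Delta$, hence infinitely many, and completes the construction. An alternative, closer to Kedlaya's own treatment, is to replace the sieve by an explicitly chosen low-parameter family whose discriminant factors into pieces of controlled degree, thereby sidestepping the high-degree squarefree-value problem altogether.
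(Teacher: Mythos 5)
First, note that the paper does not prove this statement at all: it is quoted verbatim from Kedlaya \cite{ke} and used as a black box, so there is no internal proof to compare against. Judged on its own terms, your main argument has a genuine gap at exactly the step you identify as ``the heart of the argument.'' The Ekedahl-type geometric sieve does not deliver what you claim. For a box of coefficient tuples of side $B$, the sieve splits the event $p^{2}\mid\Delta$ into two cases: tuples reducing mod $p$ into the singular locus of $\{\Delta=0\}$ (codimension two, and Ekedahl's lemma does control this for \emph{all} large $p$), and tuples reducing to a \emph{smooth} point of $\{\Delta=0\}$ with $\Delta$ nonetheless divisible by $p^{2}$. For the second case your density bound $O(p^{-2})$ comes from equidistribution modulo $p^{2}$, which is only available for $p$ small compared to $B$; but $\Delta$ has size $B^{O(n)}$, so it can be divisible by $p^{2}$ for primes far beyond that range, and the tail over these large primes is not $o(1)$ by any soft argument. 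This is precisely the obstruction that kept ``a positive proportion of monic degree-$n$ polynomials have squarefree discriminant'' open until Bhargava--Shankar--Wang resolved it with substantially heavier machinery; it cannot be dispatched by the codimension-two remark. In effect your main route proves the theorem only by assuming a result much deeper than the theorem itself.

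Your closing sentence, offered as an afterthought, is the actual proof. Kedlaya restricts to a one-parameter family: choose $P_{0}$ monic of degree $n$ with $P_{0}'(x)=n\prod_{i=1}^{n-1}(x-b_{i})$ for distinct integers $b_{i}$, and vary the constant term $t$. Then $\mathrm{disc}(P_{0}+t)=\pm\,n^{n}\prod_{i}\bigl(P_{0}(b_{i})+t\bigr)$ is a product of distinct \emph{linear} polynomials in $t$, for which squarefree values (with prescribed congruence conditions handling the primes of $S$, irreducibility, and the small primes) follow from an elementary sieve: for large $p$, $p^{2}$ dividing the product forces either $p^{2}\mid P_{0}(b_{i})+t$ for some $i$ (density $O(p^{-2})$ in $t$, a genuinely one-variable and hence controllable tail since the polynomials are linear) or $p$ dividing two distinct factors, which forces $p\mid P_{0}(b_{i})-P_{0}(b_{j})$ and so excludes all but finitely many $p$. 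Your congruence-class setup for the primes of $S$ and the inert prime $\ell$ is fine and is compatible with this family; the fix is to make the low-parameter family the argument rather than a parenthetical alternative.
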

\noindent
\textit{Proof of Theorem A.}\quad Let $n\in \mathbb{N}, n\geq 3$ and $S=\{p\ \hbox{prime}: p\leq n\}$. Let $P(x)$ be a polynomial given by Theorem \ref{kedlaya} and let $K$ be its splitting field. Since, as pointed out by Kedlaya in \cite{ke}, citing Kondo \cite{ko}, an irreducible polynomial of degree $n$ with rational integer coefficients whose discriminant is squarefree has Galois group $S_n$ over $\dQ$, we have that $G(K/\dQ)=S_n.$
Let $p$ be a rational prime.  If $p$ is unramified in $K,$ then its decomposition group is cyclic.
We may therefore assume $p$ is ramified in $K$.  By Kondo \cite [Lemma 2, Theorem 2] {ko}, the inertia group of a prime $\fp$ of $K$ dividing $p$ is of order two and generated by a transposition $\tau$.
As $p$ divides the discriminant of $P(x)$, we have that $p>n.$ In particular, $p$ does not divide $n!=|S_n|$. Since the ramification index $e_p$ divides the order of the Galois group of $K/\dQ$, $p$ does not divide $e_p$,  so that $p$ is tamely ramified in $K$. Recall now that, for any prime, the inertia group is a normal subgroup of the decomposition group, and the quotient group is cyclic of order $f$, where $f$ is the inertia degree of the prime $\fp$ over $p$.  For a tamely ramified prime, the inertia group is cyclic as well, and if it is also of order $2$ as in our case, it is central. Thus the decomposition group is  metacyclic abelian.
Moreover, as $\tau$ is not a square in $S_n$, the decomposition group splits into a direct product of $\langle\tau\rangle$ and a
cyclic group $C=\langle\sigma\rangle$ of order $f$.  If $f$ is odd, then the decomposition group is cyclic.  If $f$ is even, then the decomposition group belongs to $\mathcal{M}(S_n)$  and the possibility to choose the factor $C$  fixing the two letters moved by $\tau$ is guaranteed by the discussion about the groups in $\mathcal{M}(S_n)$  made in the introduction.
\qed

\vskip .5em
\it Proof of Corollary B. \rm The inequality $\gamma(S_n)\leq r(S_n)$ follows from Lemma \ref{s-gamma}  recalling that, by Proposition \ref{thm:char}, $s(S_n)\leq r(S_n).$
To show  $r(S_n)\leq \gamma'(S_n)$, let $\delta'$ be a special basic set  of  $S_n$ of minimal cardinality $\gamma'(S_n)$.  By Theorem A, there exists an $S_n$-extension $K$ of $\dQ$ with decomposition groups normally covered by $\delta'$.  By Proposition \ref{thm:char}, there exists an intersective polynomial with splitting field $K$ which is a product of $\gamma'(S_n)$ irreducible factors.  Hence $r(S_n)\leq \gamma'(S_n)$.  The final assertion is immediate from the first, together with the relation between $\gamma(S_n)$ and $\gamma'(S_n)$  discussed in the introduction. \qed

\vskip 1cm

\section{Partitions and permutations}\label{partitions}

The next sections of the paper deal with the question of whether or not $\gamma'(S_n)=\gamma(S_n)$. To start with we need some definitions.

\subsection{Partitions and cuts} Let $n,k\in\mathbb{N}$, with $k\leq n$. A $k$-{\em partition} of $n$ is an unordered $k$-tuple
$T=[x_1,\dots,x_k]$, with $x_i\in\mathbb{N}$ for all $i\in
\{1,\dots,k\},$ such that $n=\sum_{i=1}^{k}x_i.$
The $x_i$ are called the {\em terms} of the $k$-partition and, obviously, we have $1\leq x_i\leq n.$ If $T$ is a $k$-partition of $n$, for some $k\in \mathbb{N}$, we say that $T$ is a partition of $n.$ We denote by $\mathcal{T}(n)$ the set of partitions of $n.$
Fix  $v\geq n$ and call $v$ the representation length for $\mathcal{T}(n)$. If $T\in \mathcal{T}(n)$, let $m_j\in \{0,1,\dots, n\}$ be the number of times in which $j\in \{1,\dots, v\}$ appears as a term in $T.$ We call $m_j$ the {\it multiplicity} of $j$ in $T$ and say that  $T=[1^{m_1},\ 2^{m_2}, \dots, v^{m_v}]$  is the representation of $T$ of length $v.$
Note that, within this representation, we have $\sum_{j=1}^v j\,m_j=n$ and that the exponents $m_j$ do not represent a power. Obviously, $m_j=0$ for all $n<j\leq v$. The multiplicities equal to $1$ are usually omitted.
In many contexts also  the multiplicities equal to $0$ are omitted,  but in others some of them can be usefully put in evidence.  Let $T=[1^{m_1},\ 2^{m_2}, \dots, n^{m_n}]\in \mathcal{T}(n) $ be represented with length $n$. Let, for every $j\in \{1,\dots,n\}$, $0\leq s_j\leq m_j$ be such that $c=\sum_{j=1}^n j\,s_j$ satisfies $0< c< n$. Then
$T'=[1^{s_1},\ 2^{s_2}, \dots, n^{s_n}]\in \mathcal{T}(c)$ is called a {\it subpartition} of $T.$ Note that $T'$ is also represented with length $n$.
Let  now $c\in \mathbb{N}$, with $0< c< n$ and use $n$ as a common representation length for $\mathcal{T}(n)$, $\mathcal{T}(c)$ and $\mathcal{T}(n-c)$. If
$T_1\in\mathcal{T}(c)$ is a subpartition of $T,$ then $T_1$  defines, in a natural way, the {\it complementary partition}  $T_2=[1^{m_1-s_1},\ 2^{m_2-s_2}, \dots, n^{m_n-s_n}]\in \mathcal{T}(n-c)$. We say that  $(T_1,T_2)$ realizes  a $c$-{\it cut } for $T$ and write $T=[T_1\mid T_2].$
 If $T=[T_1\mid T_2]$ is a cut for $T$, we say that the cut {\it isolates} $T'$ if $T'$ is a subpartition of $T_1$ or $T_2$. Note that if the $c$-cut  $[T_1\mid T_2]$ isolates $T'$, then also the $(n-c)$-cut $[T_2\mid T_1]$ isolates $T'$.

For instance, if $T=[1^3,2^2,5],\ n=12,\, c=7$ and  $T_1=[1^2,5]$, then $T_1$ is a  partition of $7$ which is a  subpartition of $T$, and
$T_2=[1, 2^2]$ is the complementary partition of $n-c=5.$ Thus $T=[1^2,5\mid 1,2^2]$ is a $7$-cut for $T,$  which isolates $T'=[1,5]$ as well as $T'=[2^2]$ but not $T'=[2,5].$

\subsection{The type of a permutation}\label{type}
Let $\sigma\in S_n$  and let $\mathcal{O}(\sigma)$ be the set of orbits of $\sigma$ in the natural action on $\Omega.$ Let $X_1,\dots, X_k$ be the distinct elements of $\mathcal{O}(\sigma)$ and put $x_i=|X_i|$. Then the unordered list  $T_{\sigma}=[x_1,\dots,x_k]$ is a $k$-partition of $n$, called the {\it  type} of $\sigma$.
Note that the fixed points of $\sigma$ correspond to the $x_i=1,$ while the
lengths of the disjoint cycles in which $\sigma$ splits are given by the $x_i\geq 2.$ Recall that  the order $|\sigma|$ of $\sigma$ may be recovered by $T_{\sigma}$ through $|\sigma|=\mathrm{lcm}\{x_i\}_{i=1}^k.$ In particular, if $|\sigma|$ is even, then at least one $x_i$ is even.
Clearly, for all $k\in \mathbb{N}$, with $1\leq k\leq n,$ each $k$-partition of $n$ is the type of some permutation in $S_n$. Therefore $\mathcal{T}(n)$ coincides with the set of types for $S_n$. The concept of type is crucial in dealing with normal coverings for the symmetric group, because
 $\sigma,  \nu\in S_n$ are conjugate in $S_n$ if and only if $T_{\sigma}=T_{\nu}$. Thus $\delta=\{H_1,\dots,H_l\}$ is a
basic set for $S_n$ if and only if for every $T\in \mathcal{T}(n)$ there exists $j\in \{1,\dots, l\}$ such that $H_j$ contains a permutation $\sigma$ with
$T_{\sigma}=T.$ When a subgroup $H$ of $S_n$ contains a permutation of
type $T$ we say that `$T$ belongs to $H$' and we write $T\in H.$

\subsubsection{The canonical form of $T_{\sigma}$}\label{sigma}
Let $M\in\mathcal{M}(S_n)$ with generators $\sigma$ and $\tau$. Recall that $\tau=(i\ j)$ is a transposition while $\sigma$ is a permutation with $|\sigma|$ even, $\sigma(i)=i$ and $\sigma(j)=j.$
 In particular, $T_{\tau}=[1^{n-2},2]$  and $T_{\sigma}=[1^2, x_1,\dots, x_k]$,  where $k=|\mathcal{O}(\sigma)|-2\geq 1$. Note that $1\leq x_i<n$, for all $1\leq i\leq k.$ We say that the type of $\sigma$ is represented in  canonical form  if $ x_1,\dots, x_k\in\mathbb{N}$ are arranged so that  there exists $s\in\mathbb{N}$,  with $s\leq k$ such that $ x_i$ is
 even for $1\leq i\leq s,$  while $  x_i$  is odd for $ s<i\leq k.$
 We set $m=k-s\geq 0.$  Then, $m=0$ means that the terms $x_i$ in $T_{\sigma}$ are even for all $1\leq i\leq k.$
\vskip 1cm

 \section{Maximal subgroups of $S_n$}\label{basic}
In order to determine $\gamma(S_n)$ or $\gamma'(S_n)$, we may obviously assume that the components of a
normal covering are maximal subgroups of $S_n.$ These subgroups may be intransitive,
primitive or imprimitive. The intransitive ones have been described in the introduction as the conjugates of the subgroups in $\mathcal{P}$, with $\mathcal{P}$ defined in \eqref{P}.

Let $n=bm,$ where $b\mid n$ and $2\leq b\leq n/2.$ If $\cB$ is a partition of $\Omega$ into $m$ subsets of size $b$, we say that $\cB$ is a $(b,m)$-block system
 for $\Omega$. The imprimitive maximal subgroups of $S_n$ are the stabilisers of  all the possible block systems. Consider, for $j\in\{0,\dots, m-1\}$, the $m$ proper subsets of $\Omega$  of size $b$ given by $B_j=\{jb+i: i\in \{1,\dots, b\}\}$. Then $\cB_0=\{B_j:j\in\{0,\dots, m-1\}\}$ is a particular $(b,m)$-block system for $\Omega$ and we denote by $S_b\wr S_{m}$  its stabiliser in $S_n$. Then
the set of imprimitive maximal subgroups of $S_n$ is obtained by the conjugates of the subgroups in the set  $
\mathcal{W}$, where
$$
\mathcal{W}=\{\ S_b\wr S_{m} \ : 2\leq b\leq n/2,\, b\mid n,\ m=n/b \}.
$$

The primitive maximal subgroups of $S_n$, different from $A_n$, do not play a significant role in the normal coverings and they are excluded in all the known minimal normal coverings, with the exception of  the case $n$ prime. Namely, for any prime $p\geq 5,$ the group $S_p$ admits a unique minimal normal covering generated by the basic set
$$
\delta=\{AGL_1(p)\cong C_p\rtimes C_{p-1}, \
P_k\  :\ 2\leq k\leq \frac{p-1}{2}\},
$$
admitting the primitive maximal component $AGL_1(p).$
In particular $\gamma(S_p)=\frac{p-1}{2}$ (\cite[ Proposition 7.1]{bp}).
Recall also that the unique minimal normal covering of $S_3$ is generated by the basic set $\{A_3, P_1\}.$ Since we have observed that $\gamma'(S_3)=\gamma(S_3)$, by Corollary B, we get
 \begin{equation}\label{3}
 \gamma'(S_3)=\gamma(S_3)=r(S_3)=2.
\end{equation}

\vskip 1cm

\section{Special normal coverings and normal coverings}\label{5}

In this section, we study Group-theoretic Question 1 exploring the link between $\gamma'(S_n)$ and $\gamma(S_n)$ for $n\in A$, where $A=\{n\in\mathbb{N}: n\geq 4\}$. We start by giving an affirmative answer in the  case $n$ odd and go on, in the general case, showing that all the known upper bounds for $\gamma(S_n)$ hold also for $\gamma'(S_n)$.

\subsection{The odd degree case}\label{P2}
\begin{prop}\label{odd} Let $n\in A$ be odd. Then every minimal basic set with maximal components contains $P_2$. In particular,
$\gamma'(S_n)=\gamma(S_n)=r(S_n).$
\end{prop}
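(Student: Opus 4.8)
The plan is to pin down $P_2$ by exhibiting a single conjugacy type that, among all maximal subgroups of $S_n$, belongs to a conjugate of $P_2$ and to nothing else. If such a type exists, then to cover it every normal covering by maximal subgroups must use $P_2$ as a basic component, which is precisely the first assertion. For odd $n\in A$ (so $n\geq 5$) I would take the type $T^*=[2,n-2]\in\mathcal{T}(n)$, realised by $\sigma=(1\ 2)(3\ 4\ \cdots\ n)$, a transposition times an $(n-2)$-cycle on the complementary points. Since $\sigma$ stabilises $\{1,2\}$ we have $T^*\in P_2$, and since $2<n/2$ the subgroup $P_2$ is maximal; the whole proof then reduces to ruling out every other maximal subgroup as a possible home for $T^*$.

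The four exclusions run as follows. Because $n$ is odd the $(n-2)$-cycle is even, so $\sigma$ is an odd permutation and $T^*\notin A_n$. For an intransitive $P_x$ with $1\leq x<n/2$, a permutation of type $[2,n-2]$ lies in $P_x\cong S_x\times S_{n-x}$ only if each of its cycles fits inside a part of size $x$ or $n-x$; the $(n-2)$-cycle forces $x\leq 2$, and $x=1$ is impossible as $\sigma$ has no fixed point, leaving only $x=2$. For the primitive maximal subgroups I would use that $n-2$ is odd, whence $\sigma^{n-2}=(1\ 2)$ is a transposition; by the classical theorem of Jordan a primitive group containing a transposition is all of $S_n$, so no proper primitive maximal subgroup can contain $\sigma$. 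Finally, for an imprimitive $S_b\wr S_m$ with $b\mid n$ and $2\leq b\leq n/2$ (hence $b$ odd and $b\geq 3$), I would pass to $\sigma^2=(3\ 4\ \cdots\ n)^2$, a single $(n-2)$-cycle fixing $1$ and $2$: if $\sigma$ preserved a block system of block-size $b$ then so would $\sigma^2$, the block containing the fixed point $1$ would be $\sigma^2$-invariant, and $\sigma^2$ would then have to permute within that block the $b-1$ or $b-2$ of its points lying in $\{3,\dots,n\}$, contradicting that a single $(n-2)$-cycle admits no proper nonempty invariant subset.

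Granting this, every minimal basic set with maximal components must contain $P_2$, and the final equalities follow quickly. A minimal basic set of size $\gamma(S_n)$ with maximal components exists (replace each component by a maximal overgroup), and it contains $P_2$; since every element of $\mathcal{M}(S_n)$ is conjugate into $P_2$ (as observed in the introduction) and every cyclic subgroup is automatically covered by a normal covering, this basic set is in fact special. Hence $\gamma'(S_n)\leq\gamma(S_n)$, and combining with $\gamma(S_n)\leq\gamma'(S_n)$ together with the chain $\gamma(S_n)\leq r(S_n)\leq\gamma'(S_n)$ of Corollary B yields $\gamma'(S_n)=\gamma(S_n)=r(S_n)$.

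I expect the imprimitive case to be the main obstacle. The power trick that dispatches the primitive groups is useless here, since wreath products do contain transpositions, so one must instead argue that the single long cycle $\sigma^2$ cannot respect any nontrivial block of moderate size. Making this watertight hinges on the size bounds $0<b-2\leq b-1<n-2$, and it is exactly at this point that both hypotheses, $n$ odd (forcing $b\geq 3$) and $b\leq n/2$, are consumed.
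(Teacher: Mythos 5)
Your proof is correct and follows essentially the same route as the paper: both hinge on the observation that, for odd $n$, the type $[2,n-2]$ belongs, among maximal subgroups of $S_n$, only to conjugates of $P_2$, and then conclude via Corollary B and the fact that special metacyclic subgroups are conjugate into $P_2$. The only difference is that the paper obtains the covering fact by citing Lemma 5.2 of \cite{bp}, whereas you verify it directly through the (correct) case analysis over alternating, intransitive, primitive and imprimitive maximal subgroups.
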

\begin{proof}
Let $n\geq 5$ be odd and $\delta$ be a basic set for $S_n$, with maximal components. Consider the type $T=[2,n-2]$. Since $\gcd(2,n)=1$, by Lemma 5.2 in \cite{bp}, the only maximal subgroup of $S_n$ containing a permutation of  type $T$ is $P_2$. It follows that $P_2\in \delta$ and we conclude applying Corollary B.
\end{proof}

\subsection{The main bound $g$ }\label{main}

\begin{defn}\label{g}  For $n\in A$, let $\nu(n)$ be the number of the distinct prime  factors of $n$ and write

\begin{equation}\label{factors}
n=p_1^{\alpha_1}\cdots p_{\nu(n)}^{\alpha_{\nu(n)}}
\end{equation}
where, for every $i,j\in \{1,\dots, \nu(n)\}$,  $\alpha_i\in\mathbb{N}$, $p_i$ is a prime number and $p_i<p_j,$ for $i<j$.

Define the function $g:A\rightarrow \mathbb{N}$ by
\begin{equation}\label{g-definition}
g(n)=\begin{cases}
\ \frac{n}{2}(1-\frac{1}{p_1}) &\quad \mbox{if  } \nu(n)=1, \alpha_1=1\\

\ \frac{n}{2}(1-\frac{1}{p_1})+1         &\quad \mbox{if  } \nu(n)=1, \alpha_1\geq 2\\

\  \frac{n}{2}(1-\frac{1}{p_1}) (1-\frac{1}{p_2})  +1        &\quad \mbox{if}\   \nu(n)=2, (\alpha_1,\alpha_2)=(1,1)\\

\ \frac{n}{2}(1-\frac{1}{p_1}) (1-\frac{1}{p_2})  +2              &\quad \mbox{otherwise}\\
 \end{cases}
\end{equation}

\end{defn}

Note that, in the cases with $\nu(n)=1$,  $\frac{n}{2}(1-\frac{1}{p_1})$ counts the natural numbers  less than $n/2$ and not divisible by $p_1$; in the  cases with $\nu(n)\geq 2$, $\frac{n}{2}(1-\frac{1}{p_1}) (1-\frac{1}{p_2})$ counts  the natural numbers  less than $n/2$ and not divisible by either $p_1$ or $p_2$ ( \cite[Proposition 2.4]{bps}). Moreover, for every $n\in A$, $g(n)\geq 2.$ The function $g$ plays an important role in bounding  $\gamma(S_n)$.  When $\nu(n)$ is small and $n$ is odd, then $g(n)$ gives the exact value for $\gamma(S_n)$.
Namely, by \cite[ Proposition 3.1]{bps} and  \cite[ Propositions 7.1, 7.5, 7.6]{bp}  we have  the following.

\begin{prop}\label{gamma-g}  Let $n\in A$. Then:
\begin{itemize} \item[i)] $\gamma(S_n)\leq g(n)$, with equality when $n$ is odd and $\nu(n)\leq 2$;
\item[ii)] if $\nu(n)=2$ and $(\alpha_1,\alpha_2)\neq(1,1)$ or if $\nu(n)\geq 3$, then
\begin{equation}\label{basic-set}
\delta_C=\{P_x\ :\ 1\leq x<n/2,\ \gcd(x,p_1p_2)=1\}\cup\{S_{p_1}\wr S_{n/p_1},\ S_{p_2}\wr S_{n/p_2}\}
\end{equation}
is a basic set of order $g(n)$, called the canonical basic set.
\end{itemize}
\end{prop}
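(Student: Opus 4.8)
The plan is to read ``basic set'' through the combinatorial dictionary of Subsection~\ref{type}: a family of maximal subgroups is a basic set exactly when every type $T\in\mathcal{T}(n)$ belongs to one of its components, so I would first record how types sit inside the two relevant classes of maximal subgroups. For the intransitive ones this is the cut language already set up: $T\in P_x$ if and only if $T$ admits an $x$-cut, i.e. some subpartition of $T$ has terms summing to $x$ (equivalently $x$ is an achievable partial sum of the cycle lengths of $T$). For the imprimitive ones I would establish the standard ``super-cycle'' description: $T$ belongs to $S_b\wr S_m$ (with $n=bm$) if and only if the cycles of $T$ can be grouped --- one group per cycle of the induced permutation of the $m$ blocks --- so that the group attached to an $\ell$-cycle of blocks consists of cycles of lengths $\ell d_1,\dots,\ell d_t$ with $d_1+\dots+d_t=b$, the total number of blocks used being $m$. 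Having reduced, as in Section~\ref{basic}, to maximal components, everything becomes a purely arithmetic statement about partitions.

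For the upper bound $\gamma(S_n)\le g(n)$ I would exhibit the candidate basic set and verify it covers $\mathcal{T}(n)$. Take $\delta_C$ as in \eqref{basic-set}. The key covering step is: if $T$ is covered by none of the $P_x$ with $\gcd(x,p_1p_2)=1$ and $x<n/2$, then, because $p_1,p_2\mid n$, every achievable partial sum $c$ with $0<c<n$ is divisible by $p_1$ or by $p_2$. I would then argue that this forces all cycle lengths of $T$ to be divisible by $p_1$, or all by $p_2$: if some cycle length lay in $A'=\{x:p_1\mid x,\ p_2\nmid x\}$ and another in $B=\{x:p_2\mid x,\ p_1\nmid x\}$, their sum would be an achievable partial sum divisible by neither prime (the borderline case where these two cycles exhaust $\Omega$ is vacuous, since $x+y=n$ and $p_1\mid n$ force $p_1\mid x\iff p_1\mid y$). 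Once all cycles are divisible by a single prime $p_i$, the super-cycle description places $T$ in $S_{p_i}\wr S_{n/p_i}$ by assigning each cycle of length $p_i q$ its own block-cycle of length $q$. Counting via the inclusion--exclusion count of integers $<n/2$ coprime to $p_1p_2$ (\cite[Proposition 2.4]{bps}) gives $|\delta_C|=\frac n2(1-\frac1{p_1})(1-\frac1{p_2})+2=g(n)$, establishing part (ii) (the cases where $g$ ends in $+2$). For $n=p_1p_2$ I would note that the single subgroup $S_{p_1}\wr S_{p_2}$ already absorbs both extremal families --- a type with all cycles divisible by $p_2$ sits in one block-cycle using all $p_2$ blocks (here $\sum q_i=n/p_2=p_1$ equals the block size), while one with all cycles divisible by $p_1$ sits in single-cycle block-cycles --- which shaves the cost to $+1$ and explains why $(\alpha_1,\alpha_2)=(1,1)$ is excluded from (ii); the analogous collapse for $\nu(n)=1,\ \alpha_1\ge2$ uses the single wreath $S_{p}\wr S_{n/p}$. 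When $n=p$ is prime there are no imprimitive subgroups, so I would instead drop $P_1$ and insert $AGL_1(p)$, which carries the $p$-cycle and all types $[1,d^{(p-1)/d}]$; this recovers $g(p)=\frac{p-1}2$.

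For the equality when $n$ is odd with $\nu(n)\le2$ I would produce a matching lower bound by an incompatibility count. The workhorses are the two-cycle types $[a,n-a]$ with $a<n/2$ and $\gcd(a,p_1p_2)=1$: such a type has only the partial sums $0,a,n-a,n$, hence lies in the single intransitive subgroup $P_a$ and --- since $\gcd(a,n)=1$ kills the super-cycle condition --- in no imprimitive subgroup; granting a lemma (for $n$ odd and non-prime) that excludes the primitive maximal subgroups as well, each of these types forces its own component $P_a$. That already accounts for $\frac n2(1-\frac1{p_1})(1-\frac1{p_2})$ distinct components. I would then adjoin one further type with no proper subpartition, namely the $n$-cycle $[n]$, which lies in no $P_x$ and hence demands a genuinely new (imprimitive) component, giving the ``$+1$''. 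For the ``$+2$'' case ($\nu(n)=2$, $(\alpha_1,\alpha_2)\ne(1,1)$) I would exhibit two types, each divisible throughout by a single prime but rigid enough that no one maximal subgroup contains both, forcing both $S_{p_1}\wr S_{n/p_1}$ and $S_{p_2}\wr S_{n/p_2}$ to appear.

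The hard part is precisely this last point. Manufacturing the two ``rigid'' witnesses for the $+2$ lower bound is where the exponents genuinely enter: when some $\alpha_i\ge2$ the single-wreath absorption used above fails, and one must show combinatorially that the two families of all-$p_1$ and all-$p_2$ types cannot be simultaneously housed in one maximal subgroup --- this is the delicate analysis carried out in \cite[Propositions 7.5, 7.6]{bp}. A secondary obstacle, needed even for the $P_a$ count, is the clean exclusion of the primitive maximal subgroups for composite odd $n$, so that the two-cycle types really are pinned to their $P_a$; for $n$ prime this is exactly what makes $AGL_1(p)$ unavoidable. I expect both the upper-bound covering verification and the arithmetic count to be routine once the super-cycle lemma is in hand, with essentially all the difficulty concentrated in the lower bound for the $+2$ regime.
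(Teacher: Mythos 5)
The paper does not actually prove this proposition: it is imported wholesale from \cite[Proposition 3.1]{bps} and \cite[Propositions 7.1, 7.5, 7.6]{bp}, so there is no internal argument to compare yours against, and your proposal is best judged as a reconstruction of those cited proofs. The upper-bound half of your reconstruction is correct and complete in outline: the dictionary ($T$ lies in a conjugate of $P_x$ iff $T$ admits an $x$-cut; $T$ lies in a conjugate of $S_b\wr S_m$ iff its cycles organize into block-cycles, which for the case ``all lengths divisible by $b$'' is exactly Corollary \ref{wreathbis}) is right; the key step --- a type avoiding every $P_x$ with $\gcd(x,p_1p_2)=1$ has every proper partial sum divisible by $p_1$ or $p_2$, whence by the sum-of-two-cycles argument (with the borderline $x+y=n$ correctly dismissed using $p_1\mid n$) all cycle lengths are divisible by one fixed $p_i$ --- is sound; and your accounting of why the constant is $+2$ in case (ii) but collapses to $+1$ for $n=p^\alpha$ and $n=p_1p_2$ (single-wreath absorption) and to $+0$ with $AGL_1(p)$ for $n=p$ matches the case split in \eqref{g-definition}. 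For the equality in (i) you give only a plan, and the two ingredients you flag as hard are indeed where all the content of \cite[Propositions 7.1, 7.5, 7.6]{bp} sits: the exclusion of primitive overgroups for the types $[a,n-a]$ is genuinely delicate for $a=1$, since $(n-1)$-cycles do occur in $AGL_1(p)$ and in groups between $\mathrm{PGL}_2(q)$ and $\mathrm{P\Gamma L}_2(q)$ for $n=q+1$ (this is precisely Lemma \ref{nminusone}), and the two ``rigid'' witnesses for the $+2$ lower bound are not constructed. Since you end by citing the same sources the paper cites, nothing in your proposal is wrong, but the lower bound is no more proved in it than it is in the paper; what your route buys is a self-contained and correct proof of part (ii) and of the inequality in part (i).
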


There are  other odd degree cases for which $\gamma(S_n)=g(n)$; for instance, by \cite[Theorem 1.1]{bps}, for $n=15q$, that holds when $q$ is an odd prime such that $q\equiv 2 \pmod{15}$ and $q\not\equiv 12 \pmod{13}$.
 Also, for all the even cases in which $\gamma(S_n)$ is known, we have $\gamma(S_n)=g(n)$; for instance, that holds for all even $n$, with $4\leq n\leq 12$ (see \cite[Table 1]{bps}). On the other hand, we stress that no general exact formula is known for $\gamma(S_n)$, when $n$ is even, with a lack of knowledge even when $n$ is a power of $2$ (see \cite[Problems 2 and 3] {bps}). The bound $\gamma(S_n)\leq g(n)$ is the best information we have on the number $\gamma(S_n),$ when $n$ is even (see also Sections \ref{Maroti} and \ref{even-case}).

We now want to obtain the same inequality of Proposition \ref{gamma-g} for $\gamma'(S_n).$ To that purpose, we need some preliminary results.

 \begin{lem}\label{wreathgen} Let $n\in\mathbb{N}$ and $b, m\in\mathbb{N}$ with $b,m\geq 2$ such that $n=bm.$ Let
 $l\in \mathbb{N}$ and, for $i\in \{1,\dots,l\} $, let $\sigma_i\in S_n$  be cycles with lengths divisible by $b.$ If the $\sigma_i$ are disjoint, then $\langle \sigma_1,\dots, \sigma_l \rangle$ is contained in a conjugate of $S_b \wr S_m$.
 \end{lem}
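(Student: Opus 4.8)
The plan is to exhibit a single $(b,m)$-block system $\cB$ of $\Omega$ that is simultaneously preserved by all of $\sigma_1,\dots,\sigma_l$. By the description of the imprimitive maximal subgroups in Section~\ref{basic}, the stabiliser in $S_n$ of any $(b,m)$-block system is a conjugate of $S_b\wr S_m$; hence producing such a $\cB$ immediately places the whole group $\langle\sigma_1,\dots,\sigma_l\rangle$ inside one conjugate of $S_b\wr S_m$, which is exactly the assertion.

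The heart of the argument is a construction on the support of a single cycle. Fix $i$ and write $\sigma_i=(a_0\ a_1\ \cdots\ a_{\ell_i-1})$, with indices read modulo $\ell_i$, so that $\sigma_i(a_t)=a_{t+1}$; here $\ell_i$ is the length of $\sigma_i$ and, by hypothesis, $b\mid\ell_i$, say $\ell_i=bq_i$. For $s\in\{0,\dots,q_i-1\}$ I set $D^{(i)}_s=\{a_t : t\equiv s\pmod{q_i}\}$. Each $D^{(i)}_s$ has exactly $b$ elements, the $q_i$ sets partition the support of $\sigma_i$, and since $\sigma_i$ carries $a_t$ to $a_{t+1}$ it maps $D^{(i)}_s$ bijectively onto $D^{(i)}_{s+1}$, subscripts modulo $q_i$; equivalently, the $D^{(i)}_s$ are the orbits of the unique order-$b$ subgroup $\langle\sigma_i^{q_i}\rangle$ of $\langle\sigma_i\rangle$. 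Thus $\sigma_i$ cyclically permutes the blocks $D^{(i)}_0,\dots,D^{(i)}_{q_i-1}$.

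Next I assemble the global system. As the $\sigma_i$ are disjoint, their supports are pairwise disjoint, so all the $D^{(i)}_s$ (over $i$ and $s$) are pairwise disjoint blocks of size $b$. They cover $\sum_i\ell_i=b\sum_i q_i$ points, so the complementary set $F$ of common fixed points has size $n-b\sum_i q_i=b\,(m-\sum_i q_i)$, a nonnegative multiple of $b$ (nonnegative because $b\sum_i q_i\leq bm=n$). I then break $F$ into $m-\sum_i q_i$ arbitrary blocks of size $b$. Together with the $D^{(i)}_s$ these form a partition $\cB$ of $\Omega$ into $\sum_i q_i+(m-\sum_i q_i)=m$ blocks, each of size $b$, that is, a genuine $(b,m)$-block system. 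Each $\sigma_i$ preserves $\cB$, since it permutes among themselves the blocks lying inside its own support and fixes pointwise, hence setwise, every block coming from another cycle or from $F$. Therefore $\langle\sigma_1,\dots,\sigma_l\rangle$ stabilises $\cB$, which yields the conclusion.

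I expect the only genuinely delicate points to be the single-cycle construction and the observation that the leftover set $F$ again has cardinality divisible by $b$; both rest on the hypothesis $b\mid\ell_i$, which is precisely what guarantees that each cycle's support, and therefore also its complement, splits evenly into blocks of size $b$. Everything else is bookkeeping.
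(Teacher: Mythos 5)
Your proof is correct and follows essentially the same route as the paper's: both build a $(b,m)$-block system from the $b$-element orbits of the order-$b$ power of each cycle (your $D^{(i)}_s$ are exactly the orbits of $\sigma_i^{q_i}$, which is the paper's $\sigma_i^{m_i}$) together with an arbitrary partition of the common fixed-point set into $b$-sets, and then observes that each $\sigma_i$ stabilises this system. The only cosmetic difference is that you check block preservation by the explicit index shift $t\mapsto t+1 \pmod{q_i}$, whereas the paper deduces it from the fact that $\sigma_i$ commutes with $\sigma_i^{m_i}$ and with the other $\sigma_j$.
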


\begin{proof} By hypothesis, the length of $\sigma_i$ is of the form $m_ib$, for some $m_i\in\mathbb{N}$, and thus,  for each $i\in \{1,\dots,l\}$, $\sigma_i^{m_i}$ is a disjoint product of $b$-cycles. Let $\Gamma=\{j\in \Omega: \sigma_i(j)=j, \ \hbox{for all}\ i\in\{1,\dots,l\}\}$. From $b\mid n$, we get $b\mid |\Gamma|$, so that $\Gamma$ can be partitioned into $k=|\Gamma|/b\geq 0$ subsets $\Gamma_j$ of size $b,$ for $j\in\{1,\dots, k\}$.
The set of  orbits of the $\sigma_i^{m_i}$ over all $i\in \{1,\dots,l\}$  together with the sets $\Gamma_j$ over all $j\in\{1,\dots, k\}$, is thus a $(b,m)$-block system $\cB$ for $\Omega$. Now note that each $\sigma_i$ commutes with $\sigma_i^{m_i}$, as well as with each $\sigma_j$, $j\neq i$. Moreover each $\sigma_i$ fixes every $\Gamma_j.$
Thus each $\sigma_i$ stabilizes the block system $\cB$.  It follows that $\langle \sigma_1,\dots, \sigma_l \rangle$ is contained in the stabilizer of $\cB$ in $S_n$ and thus in a suitable conjugate of $S_b \wr S_m$.
\end{proof}
 \begin{cor}\label{wreathbis} Let $n\in\mathbb{N}$  and $\sigma\in S_n$. If $b\in\mathbb{N},$ with $b\geq 2,$ divides all the terms $x_i$ in
  $T_{\sigma}=[x_1,\dots,x_k]$, then  $\sigma$ is contained in a conjugate of $S_b \wr S_{n/b}$.
   \end{cor}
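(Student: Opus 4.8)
The plan is to read the statement directly off Lemma \ref{wreathgen}, applied to the disjoint cycle decomposition of $\sigma$, so essentially no new idea is needed; the work is just in checking the hypotheses of that lemma.

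First I would record what the divisibility assumption gives. Since $b\geq 2$ and $b$ divides every term $x_i$ of $T_\sigma=[x_1,\dots,x_k]$, each $x_i\geq b\geq 2$; hence $\sigma$ has no fixed points, and $T_\sigma$ lists exactly the lengths of the pairwise disjoint nontrivial cycles $\sigma_1,\dots,\sigma_k$ whose product is $\sigma$. Summing the terms gives $n=\sum_{i=1}^k x_i$, which is divisible by $b$; thus $m:=n/b$ is a positive integer and $\sigma=\sigma_1\cdots\sigma_k\in\langle\sigma_1,\dots,\sigma_k\rangle$.

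Next I would feed this data into Lemma \ref{wreathgen} with $l=k$: the $\sigma_i$ are disjoint cycles whose lengths $x_i$ are divisible by $b$, so the lemma yields that $\langle\sigma_1,\dots,\sigma_k\rangle$, and in particular $\sigma$, lies in a conjugate of $S_b\wr S_m=S_b\wr S_{n/b}$, which is exactly the assertion.

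The only point requiring attention is the boundary case $m=1$, which the hypothesis $m\geq 2$ of Lemma \ref{wreathgen} excludes. Here $n=b$ forces $\sigma$ to be a single $n$-cycle, and $S_b\wr S_1=S_n$ contains $\sigma$ trivially, so the claim holds in that case as well. I expect this small piece of bookkeeping to be the only (very minor) obstacle; all the substance is carried by Lemma \ref{wreathgen}.
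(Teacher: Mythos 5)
Your proposal is correct and follows the same route as the paper: decompose $\sigma$ into its disjoint cycles and invoke Lemma \ref{wreathgen}. Your extra care with the degenerate case $n=b$ (where $S_b\wr S_1=S_n$) and the observation that $b\geq 2$ rules out fixed points are sensible bits of bookkeeping that the paper's one-line proof leaves implicit.
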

\begin{proof}  Split  $\sigma=\sigma_1\cdots \sigma_k$  into disjoint cycles $\sigma_i$ of length $x_i$, for $i\in \{1,\dots, k\}.$ By Lemma \ref{wreathgen}, $\langle \sigma_1,\dots, \sigma_l \rangle$ is contained in a conjugate of $S_b \wr S_{n/b}$. In particular $\sigma\in \langle \sigma_1,\dots, \sigma_l \rangle$ is contained in a conjugate of $S_b \wr S_{n/b}$.
\end{proof}

\begin{cor}\label{wreath} Let $n\in A$ and $M=\langle \sigma \rangle\times \langle \tau\rangle\in \mathcal{M}(S_n).$ If in the canonical form of $T_{\sigma}=[1^2, x_1,\dots, x_k]$ every $x_i$ is even, then $M\leq (S_2\wr S_{n/2})^g$, for some $g\in S_n$.
 \end{cor}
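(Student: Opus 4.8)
The plan is to produce a single $(2,n/2)$-block system of $\Omega$ stabilized by all of $M$ and then to invoke Lemma~\ref{wreathgen}. The guiding observation is that Corollary~\ref{wreathbis} cannot be applied to $\sigma$ directly: the type $T_\sigma=[1^2,x_1,\dots,x_k]$ contains the two fixed points $i,j$ as terms equal to $1$, which are not divisible by $2$. However, the transposition $\tau=(i\ j)$ is itself a cycle of length $2$ whose support is exactly this pair of fixed points, so $\tau$ may be treated as one more even-length disjoint cycle that absorbs $\{i,j\}$ into a block of size $2$.

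First I would record that the hypothesis forces $n$ to be even, so that $S_2\wr S_{n/2}$ is meaningful: since $n=2+\sum_{\ell=1}^k x_\ell$ with each $x_\ell$ even, $n$ is even, and $n/2\geq 2$ because $n\geq 4$. Next I would split $\sigma$ into its disjoint cycles $\sigma=\sigma_1\cdots\sigma_k$, where $\sigma_\ell$ has length $x_\ell$ and is supported on $\Omega\setminus\{i,j\}$, recalling that $i$ and $j$ are precisely the fixed points of $\sigma$. Since $\tau$ moves only $i$ and $j$, the cycles $\sigma_1,\dots,\sigma_k,\tau$ are pairwise disjoint, and each has length divisible by $b=2$ --- the $x_\ell$ by hypothesis and $\tau$ trivially.

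Then I would apply Lemma~\ref{wreathgen} with $b=2$ and $m=n/2$ to the disjoint cycles $\sigma_1,\dots,\sigma_k,\tau$, obtaining that $\langle\sigma_1,\dots,\sigma_k,\tau\rangle$ lies in a conjugate $(S_2\wr S_{n/2})^g$. Since $\sigma=\sigma_1\cdots\sigma_k$ and $\tau$ both belong to $\langle\sigma_1,\dots,\sigma_k,\tau\rangle$, it follows that $M=\langle\sigma\rangle\times\langle\tau\rangle$ is contained in the same conjugate, which is the claim.

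I do not expect a genuine obstacle here; the only point needing care is the decision to feed $\tau$ into the family of disjoint even cycles rather than trying to push $\sigma$ alone through Corollary~\ref{wreathbis}. Conceptually this works because, once $\tau$ is included, the common fixed-point set $\Gamma$ of $\sigma_1,\dots,\sigma_k,\tau$ is empty, and in the block system built in the proof of Lemma~\ref{wreathgen} the block containing $i$ and $j$ is exactly the orbit $\{i,j\}$ of $\tau$, which $\tau$ plainly stabilizes.
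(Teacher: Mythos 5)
Your proof is correct and follows the same route as the paper: split $\sigma$ into its disjoint even-length cycles, adjoin $\tau$ as one more disjoint $2$-cycle, and apply Lemma~\ref{wreathgen} with $b=2$. Your extra remarks (why Corollary~\ref{wreathbis} alone fails because of the two fixed points, and that the resulting block system contains $\{i,j\}$ as a block) are accurate but not needed beyond what the paper records.
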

\begin{proof}  Split  $\sigma=\sigma_1\cdots \sigma_k$  into disjoint cycles $\sigma_i$ of length $x_i$, for $i\in \{1,\dots, k\}$ and define $\sigma_{k+1}=\tau$. Note that $n$ is necessarily even so that  $b=2\mid n$.
Now apply Lemma \ref{wreathgen} to the cycles $\sigma_i$ for $i\in \{1,\dots, k+1\}$ and to $b=2$.
\end{proof}

\begin{lem}\label{intrans} Let $n\in A$ and $M=\langle \sigma \rangle\times \langle \tau\rangle\in \mathcal{M}(S_n)$.
If, for some $c\in\mathbb{N}$ with $c<n$, there exists a $c$-cut of  $T_{\sigma}$ isolating $[1^2]$, then there exists $g\in S_n$ such that $M\leq P_x^g$, where $x=\min\{c,n-c\}.$
\end{lem}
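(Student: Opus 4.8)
The plan is to produce a concrete subset $X\subseteq\Omega$ of size $c$ (or $n-c$) that is stabilized setwise by all of $M$; once this is in hand, the setwise stabilizer $\{\psi\in S_n:\psi(X)=X\}$ equals $P_x^g$ for a suitable $g\in S_n$ with $x=\min\{c,n-c\}$ by the observation recorded in the introduction, and $M\leq P_x^g$ then follows immediately.

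To build $X$, I would interpret the given $c$-cut $T_\sigma=[T_1\mid T_2]$ on the level of the orbit set $\mathcal{O}(\sigma)$. The subpartition $T_1\in\mathcal{T}(c)$ prescribes, for each orbit length, how many $\sigma$-orbits of that length to select; taking $X$ to be the union of a corresponding collection of $\sigma$-orbits gives $|X|=c$ and, automatically, $\sigma(X)=X$, since $X$ is a union of $\sigma$-orbits. Using the symmetry noted after the definition of a cut (a $c$-cut $[T_1\mid T_2]$ isolates $[1^2]$ iff the $(n-c)$-cut $[T_2\mid T_1]$ does), I may assume without loss of generality that $[1^2]$ is isolated on the $T_1$-side, i.e.\ that the multiplicity of the part $1$ in $T_1$ is at least $2$.

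The decisive step is to choose the realizing orbits so that $\tau$ also stabilizes $X$. Recall $\tau=(i\ j)$ with $\sigma(i)=i$ and $\sigma(j)=j$, so $\{i\}$ and $\{j\}$ are two distinct orbits of length $1$ in $\mathcal{O}(\sigma)$, and indeed $T_\sigma=[1^2,x_1,\dots,x_k]$ has multiplicity at least $2$ for the part $1$. Since $T_1$ demands at least two orbits of length $1$, I can arrange the selection so that $\{i\}$ and $\{j\}$ are among the length-$1$ orbits comprising $X$. Then $i,j\in X$, whence $\tau(X)=X$. Consequently $M=\langle\sigma\rangle\times\langle\tau\rangle$ stabilizes $X$ setwise, and the argument concludes as above.

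The only point that needs care is this matching step: the cut is defined purely combinatorially on partitions, whereas $i$ and $j$ are specific points of $\Omega$, so one must verify that the freedom in realizing the abstract subpartition $T_1$ by actual orbits suffices to force $\{i\}$ and $\{j\}$ into $X$. This is exactly what the isolation hypothesis delivers, as it guarantees at least two length-$1$ orbits on the chosen side of the cut; everything else is bookkeeping about unions of orbits and the standard identification of set stabilizers with conjugates of $P_x$.
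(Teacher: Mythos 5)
Your proposal is correct and follows essentially the same route as the paper: realize $T_1$ by a union of $\sigma$-orbits chosen to include the two fixed-point orbits $\{i\}$ and $\{j\}$, observe that this set is then also a union of $\tau$-orbits, and identify its setwise stabilizer with a conjugate of $P_x$. The "matching step" you flag is handled in the paper exactly as you describe, by choosing the realizing orbit collection to contain $\{i\}$ and $\{j\}$ outright.
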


\begin{proof}  Let $i,j\in \Omega$ such that $\tau=(i\ j)$, $\sigma(i)=i$ and $\sigma(j)=j.$  Let $[T_1\mid T_2]$ be a $c$-cut of  $T_{\sigma}$ isolating  $[1^2]$, with $c<n$.
Since passing from $c$ to $n-c$ does not change $x=\min\{c,n-c\},$ we can assume that $[1^2]$ is a subpartition of $T_1$. Let $\mathcal{O}_1(\sigma)$ be a subset of $\mathcal{O}(\sigma)$ containing the two orbits $\{i \},\ \{j \}$ and such that $T_1=T_{\mathcal{O}_1(\sigma)}$.  Then $O_1(\sigma)=\underset { X\in \mathcal{O}_1(\sigma) }{\cup}X$  has size $c$.  We show that $\sigma,\tau\in G=
\{\psi\in S_n : \psi(O_1(\sigma))=O_1(\sigma)\}$. For $\sigma$ this is a trivial consequence of the fact that $O_1(\sigma)$, by definition, is union of orbits of $\sigma$. On the other hand, since every orbit of $\tau$, up to $\{i ,j \}$, is a singleton and $O_1(\sigma)\supseteq \{i ,j \},$ we have that $O_1(\sigma)$ is also a union of orbits of $\tau.$
Thus,  $G=P_x^g,$ for a suitable $g\in S_n$.
\end{proof}

\begin{prop}\label{gamma'-g}  Let $n\in A.$ Then $\gamma'(S_n)\leq g(n).$
\end{prop}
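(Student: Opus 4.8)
The plan is to reduce to $n$ even and then to prove that a basic set of cardinality $g(n)$ is actually \emph{special}. For $n$ odd there is nothing to do: Proposition \ref{odd} gives $\gamma'(S_n)=\gamma(S_n)$ and Proposition \ref{gamma-g}(i) gives $\gamma(S_n)\leq g(n)$. So assume $n$ even, so that $p_1=2$, and fix a basic set $\delta$ with $|\delta|=g(n)$ witnessing $\gamma(S_n)\leq g(n)$. When $\nu(n)\geq 2$ with $(\alpha_1,\alpha_2)\neq(1,1)$, or $\nu(n)\geq 3$, I would take $\delta=\delta_C$, the canonical basic set of Proposition \ref{gamma-g}(ii); for the two residual even families $n=2^{\alpha_1}$ and $n=2p_2$ I would take the corresponding minimal basic set from \cite{bp}. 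Since $\delta$ is a basic set it already covers every cyclic subgroup of $S_n$, so it remains only to show that every special metacyclic $M\in\mathcal{M}(S_n)$ lies in a conjugate of some component of $\delta$; this makes $\delta$ special and yields $\gamma'(S_n)\leq|\delta|=g(n)$.

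Fix $M=\langle\sigma\rangle\times\langle\tau\rangle\in\mathcal{M}(S_n)$ and write $T_\sigma=[1^2,x_1,\dots,x_k]$ in canonical form. I would cover $M$ through one of the two mechanisms already prepared by the preceding lemmas. If every $x_i$ is even, Corollary \ref{wreath} embeds $M$ in a conjugate of $S_2\wr S_{n/2}$, which is a component of $\delta$ (here $p_1=2$). If instead some $x_i$ is odd, I would look for a set $I$ of orbits of $\sigma$ such that the cut of $T_\sigma$ isolating $[1^2]$ with $c=2+\sum_{i\in I}x_i$ has $x=\min\{c,n-c\}$ equal to an admissible intransitive index, i.e. $x$ odd, $x<n/2$, and $\gcd(x,p_2)=1$ whenever $\nu(n)\geq 2$; Lemma \ref{intrans} then embeds $M$ in a conjugate of $P_x\in\delta$.

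The crux is the existence of such an $I$. Since $\sum_i x_i=n-2$ is even, the number $m$ of odd terms among $x_1,\dots,x_k$ is even, and since some $x_i$ is odd we have $m\geq 2$. Any $I$ with an odd number of odd terms makes $\sum_{i\in I}x_i$ odd, hence $c$ and $x$ odd; for $n=2^{\alpha_1}$ this already gives an admissible index. For $\nu(n)\geq 2$ I must moreover secure $p_2\nmid c$; since $p_2\mid n$ one has $\gcd(c,p_2)=\gcd(n-c,p_2)$, and $p_2\mid c$ is equivalent to $\sum_{i\in I}x_i\equiv -2\pmod{p_2}$. Suppose this congruence held for every admissible $I$. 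Testing a single odd orbit forces each odd $x_i\equiv -2\pmod{p_2}$, and testing one odd together with one even orbit forces each even $x_i\equiv 0\pmod{p_2}$. If $m\geq 4$, testing three odd orbits gives $-6\equiv -2\pmod{p_2}$, i.e. $p_2\mid 4$, impossible; if $m=2$, summing over all orbits gives $\sum_i x_i\equiv -4\pmod{p_2}$, while $p_2\mid n$ forces $\sum_i x_i\equiv -2$, so $p_2\mid 2$, again impossible. Hence a good $I$ exists, and the minor conditions $0<c<n$ and $c\neq n/2$ are arranged using the freedom in choosing $I$ (e.g. swapping one odd orbit for another or adjoining an even one).

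The step I expect to be the main obstacle is this final existence argument, because it has to be dovetailed with the exact rule defining which $P_x$ belong to $\delta$: the coprimality demanded is only to $p_1p_2=2p_2$, and it is precisely the identity $\sum_i x_i=n-2$ together with the forced parity $m$ even that excludes the single dangerous configuration. A secondary, more clerical obstacle is the two exceptional even families $n=2^{\alpha_1}$ and $n=2p_2$, whose size-$g(n)$ basic sets are not given by the canonical formula; for these I would import the explicit set from \cite{bp} and check that it still contains the components ($S_2\wr S_{n/2}$ and the relevant $P_x$) on which the two covering mechanisms rely.
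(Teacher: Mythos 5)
Your proposal is correct and follows essentially the same route as the paper: reduce to $n$ even, take the same size-$g(n)$ basic sets (the canonical one, plus the explicit sets from \cite{bp} for $n=2^{\alpha}$ and $n=2p_2$), and cover each $M\in\mathcal{M}(S_n)$ either via Corollary \ref{wreath} when all $x_i$ are even or via Lemma \ref{intrans} applied to a suitable cut otherwise. The only difference is cosmetic: where you argue by contradiction over all candidate cuts to produce an odd $c$ with $p_2\nmid c$, the paper constructs the cut directly (taking a single odd term not divisible by $p_2$ if one exists, and otherwise pairing an odd term with an even term not divisible by $p_2$, whose existence follows from the same identity $\sum_i x_i=n-2$ and $p_2\mid n$).
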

\begin{proof}  If $n$ is odd, by Propositions \ref{odd} and \ref{gamma-g}, we immediately have $\gamma'(S_n)=\gamma(S_n)\leq g(n)$.  Let $n$ be even so that $p_1=2$ in \eqref{factors}. Let first $\nu(n)=1$, that is, $n=2^{\alpha}$, for some $\alpha\geq 2.$ By \cite[Proposition 7.5]{bp},
\[\delta=\{S_2\wr S_{2^{\,\alpha -1}},\ P_x \,:\,1\leq x<2^{\alpha-1},\   x\  \hbox{odd}\}\]
is a minimal basic set for $S_n$ of size $g(n)=2^{\alpha-2}+1$. We show that $\delta$ is special checking that every $M=\langle \sigma \rangle\times \langle \tau\rangle \in\mathcal{M}( S_n)$ is contained, up to conjugacy, in a subgroup belonging to $\delta.$  Let $T_{\sigma}=[1^2, x_1,\dots, x_k]$ be the canonical form of $T_{\sigma}$ as described in \ref{sigma}, so that for a suitable $s\in \mathbb{N}$, with $s\leq k$, $ x_i$ is even for $1\leq i\leq s$ and $  x_i$  is odd for $ s<i\leq k$.  Let $m=k-s\geq 0$.
%and $s\leq k$ such that $ x_i$ is
% even for $1\leq i\leq s,$  while $  x_i$  is odd for $ s<i\leq k.$ Let $m=k-s\geq 0.$
If $m=0$, then $x_i$ is even for all $1\leq i\leq k$ and so, by Corollary \ref{wreath}, $M\leq (S_2\wr S_{2^{\,\alpha -1}})^g$, for some $g\in S_n$ and $S_2\wr S_{2^{\,\alpha -1}}\in \delta.$
 If $m> 0$, then  $ x_k$ is odd and  $n/2=2^{\,\alpha -1}$ being even, we have $x_k\neq n/2$.
Thus, if $x=\min\{x_k, n-x_k\}$, we have that $P_x\in \delta$. Moreover $T_{\sigma}=[x_k\mid 1^2, x_1,\dots,x_{k-1}]$ is a $x_k$-cut  for $T_{\sigma}$ isolating $[1^2]$. Then, by Lemma \ref{intrans},  we  get $M\leq P_x^g$, for some $g\in S_n.$

Next let $\nu(n)=2, (\alpha_1,\alpha_2)=(1,1)$, so that $n=2p_2$, with $p_2$ an odd prime. By \cite[Proposition 7.6]{bp},
$$
\delta=\{S_2\wr S_{p_2},\   P_x\
 :\ 1\leq x<n/2,\  x\  \hbox{odd}  \}
$$
is a minimal basic set for $S_n$ of size $g(n)=\frac{p_2+1}{2}$.  We need only to show that any $M=\langle \sigma \rangle\times \langle \tau\rangle\in \mathcal{M}(S_n)
$ is contained, up to conjugacy, in a subgroup belonging to $\delta.$ Let, as before,  $T_{\sigma}=[1^2, x_1,\dots, x_k]$ be the canonical form of $T_{\sigma}$. %as defined in \ref{sigma}.
%, with $s\leq k$ such that $ x_i$ is even for $1\leq i\leq s,$  while $  x_i$  is odd for $ s<i\leq k.$ Let $m=k-s\geq 0.$
 If $m=0$, then $x_i$ is even for all $1\leq i\leq k$ and so, by Corollary \ref{wreath}, $M\leq (S_2\wr S_{p_2})^g$, for some $g\in S_n$. If $m> 0$, $n-2-\sum_{i=1}^sx_i$ being even, we have that $m\geq 2$ is also even. Thus $T_{\sigma}$ admits at least two odd terms among the $x_i$ for $1\leq i\leq k$. Let them be $x_u,\,x_v$. Since $x_u+x_v\leq n-2$, one of them is less than $n/2$. Calling that term $x$, we have $P_x\in \delta$. Moreover we have a cut for $T_{\sigma}=[T_1\mid T_2]$, isolating $[1^2],$ in which $T_1=[x]$ and $T_2$ is the complementary partition.
Thus, by Lemma \ref{intrans}, we deduce that $M\leq P_x^g$, for some $g\in S_n.$

Finally let $\nu(n)=2$ and  $(\alpha_1,\alpha_2)\neq (1,1)$ or $\nu(n)\geq 3$. By Proposition \ref{gamma-g}, we have that
$$
\delta=\{P_x\ :\ 1\leq x<n/2,\ \gcd(x,2p_2)=1\}\cup\{S_{2}\wr S_{n/2},\ S_{p_2}\wr S_{n/p_2}\}
$$
 is a basic set for $S_n$ of size $g(n)=\frac{n}{4}(1-\frac{1}{p_2})+2$. We show that $\delta$ is special. Let $M=\langle \sigma \rangle\times \langle \tau\rangle$ and $T_{\sigma}=[1^2, x_1,\dots, x_k]$, as above, be the canonical form of $T_{\sigma}$. %, with $s\leq k$ such that $ x_i$ is even for $1\leq i\leq s,$  while $  x_i$  is odd for $ s<i\leq k.$ Let $m=k-s\geq 0.$
 If $m=0$, then as in the previous cases, we get $M\leq (S_2\wr S_{n/2})^g$, for some $g\in S_n$. Next assume $m>0.$ Then for every $i\in \{s+1,\dots, k\}$, we have $x_i$ odd.
If there exists one of those $x_i$ with  $p_2\nmid x_i$, then we necessarily have $x_i\neq n/2$, because $p_2$ divides $n/2.$ It follows that
 $P_x$, with $x=\min\{x_i, n-x_i\},$ belongs to $\delta.$ By Lemma \ref{intrans}, applied to the cut of $T_{\sigma}$ with $T_1=[x]$, we then get $M\leq P_x^g$, for some $g\in S_n.$ It remains to consider the case in which $p_2\mid x_i$, for all $i\in \{s+1,\dots, k\}$. Consider the $x_i$ with $1\leq i \leq s$. We cannot have all of them divisible by $p_2$ because this would imply $p_2\mid n-(n-2)=2$, contradicting $p_2$ odd. Thus
 there exists $u\in \{1,\dots,s\}$ such that $p_2\nmid x_u.$ Pick a term $x_v$ with $v\in \{s+1,\dots, k\}$ and consider the $c$-cut $T_{\sigma}=[x_u, x_v\mid T_2]$, where $c=x_u+x_v<n$ and $T_2$ is the complementary partition of $T_1=[x_u, x_v].$ Note that the cut isolates $[1^2]$ and that $2, p_2\nmid c$. In particular $c\neq n/2$ and thus, by Lemma \ref{intrans},  defining $x=\min\{c, n-c\},$ we have $P_x\in \delta$ and  $M\leq P_x^g$, for some $g\in S_n.$

\end{proof}
\begin{cor}\label{equality}  Let $n\in A$.
If $\gamma(S_n)=g(n)$, then $r(S_n)=\gamma'(S_n)=\gamma(S_n).$
\end{cor}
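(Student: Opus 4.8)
The plan is to observe that this corollary requires no new group-theoretic input: its entire content is the assembly of inequalities already in hand, together with the hypothesis. I would present it as a one-line squeeze.

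First I would recall from Corollary B the chain $\gamma(S_n)\leq r(S_n)\leq \gamma'(S_n)$, valid for every $n\geq 3$, which locates $r(S_n)$ between the two covering numbers. Next I would invoke Proposition \ref{gamma'-g}, which supplies the upper bound $\gamma'(S_n)\leq g(n)$. Concatenating these two facts yields
\[
\gamma(S_n)\leq r(S_n)\leq \gamma'(S_n)\leq g(n).
\]
Then I would feed in the hypothesis $\gamma(S_n)=g(n)$: since the leftmost and rightmost terms of the chain now coincide, each intermediate inequality is forced to be an equality, so $r(S_n)=\gamma'(S_n)=\gamma(S_n)$, as claimed.

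There is essentially no obstacle at this stage, since the genuine work was already done upstream — in Theorem A (through Corollary B), which controls the decomposition groups and gives $r(S_n)\leq \gamma'(S_n)$, and in Proposition \ref{gamma'-g}, whose case-by-case verification that the canonical basic sets are \emph{special} yields $\gamma'(S_n)\leq g(n)$. The corollary merely records the payoff: whenever the standard upper bound $g(n)$ for $\gamma(S_n)$ is actually attained — which by Proposition \ref{gamma-g} occurs for all odd $n$ with $\nu(n)\leq 2$, as well as in the further cases mentioned afterward — the arithmetic invariant $r(S_n)$ is determined exactly and collapses onto the purely group-theoretic quantity $\gamma(S_n)$. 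The only point worth flagging is to make sure the chain of inequalities is quoted in the correct direction and that Proposition \ref{gamma'-g} is applied to the same $n\in A$, so that the hypothesis $\gamma(S_n)=g(n)$ can legitimately pinch the chain shut.
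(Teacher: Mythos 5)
Your proof is correct and is exactly the argument the paper intends: combine the chain $\gamma(S_n)\leq r(S_n)\leq \gamma'(S_n)$ from Corollary B with the bound $\gamma'(S_n)\leq g(n)$ of Proposition \ref{gamma'-g}, then let the hypothesis $\gamma(S_n)=g(n)$ squeeze the chain into equalities. (The paper's one-line proof even contains a typo, citing the corollary itself instead of Proposition \ref{gamma'-g}; your write-up makes the intended references explicit.)
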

\begin{proof} It is an immediate consequence of Propositions \ref{prop:prop} and \ref{equality}.
\end{proof}

\subsection{The degrees $10$ and $14$}\label{10}

We present now two interesting examples. To treat them, we need  to recall some classic  results.
\begin{lem} \label{extract} \emph{ (\cite[Theorem 13.8]{WI},  \cite[Theorem 4.11]{CA})}
A primitive group of degree $n$, which contains a permutation of type $[1^{n-m},m]$, where
$2\leq m\leq n-5,$ contains $A_n$.
\end{lem}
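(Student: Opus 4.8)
The plan is to prove this as an instance of Jordan's classical theory of Jordan sets, with the element of type $[1^{n-m},m]$ supplying a Jordan set whose size is controlled by the hypothesis. Write $\sigma$ for the given permutation, let $\Gamma\subseteq\Omega$ be its support, so $|\Gamma|=m$, and let $\Delta=\Omega\setminus\Gamma$ be its set of fixed points, with $|\Delta|=n-m\geq 5$. First I would observe that $\Gamma$ is a proper nontrivial Jordan set for the primitive group $G$: the cyclic group $\langle\sigma\rangle$ fixes $\Delta$ pointwise and acts as a single $m$-cycle, hence transitively, on $\Gamma$, so the pointwise stabilizer $G_{(\Delta)}$ is transitive on $\Gamma$, while $2\leq m<n$ makes $\Gamma$ neither trivial nor all of $\Omega$.

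The core of the argument is then the fundamental lemma on Jordan sets: a finite primitive group possessing a proper nontrivial Jordan set is $2$-transitive. I would next strip off the points of the complement $\Delta$ one at a time. Fixing a point $\alpha\in\Delta$, the stabilizer $G_\alpha$ is transitive on $\Omega\setminus\{\alpha\}$, still contains $\sigma$ as an $m$-cycle, and now fixes $n-1-m\geq 4$ points; provided $G_\alpha$ remains primitive on $\Omega\setminus\{\alpha\}$, the same hypotheses persist with $n$ replaced by $n-1$. Iterating this descent shows that $G$ is $k$-transitive with $k$ of size comparable to $n-m$, and the hypothesis $n-m\geq 5$ is precisely what forces the degree of transitivity to be high enough (at least $6$-fold) to trigger the final step.

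To finish, I would invoke the classical conclusion that a sufficiently multiply transitive group of degree $n$ contains $A_n$: the only multiply transitive proper subgroups that could interfere, the Mathieu groups, are at most $5$-transitive, so a $6$-transitive group is symmetric or alternating. Equivalently, one can phrase the endgame through the minimal-degree bounds in the Jordan--Bochert circle, since an $m$-cycle has support of size $m\leq n-5$, which is small relative to $n$, and a primitive group of small minimal degree is $S_n$ or $A_n$.

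The main obstacle I anticipate is the inductive bootstrapping in the second paragraph, namely the step where a point stabilizer fails to be primitive. Handling this is the genuine difficulty in Jordan's theory: one must exploit the single-cycle structure of $\sigma$ together with a putative block decomposition preserved by $G_\alpha$ to contradict the primitivity of the ambient group $G$, and it is exactly this mechanism that breaks down if $\sigma$ is allowed to move almost all of $\Omega$, which is why the lower bound $n-m\geq 5$ on the number of fixed points is imposed. This delicate control is what is packaged into Wielandt's Theorem 13.8 and the companion reference. A second point requiring care is keeping the transitivity count sharp enough to conclude without appealing to the full classification of finite simple groups; this is achieved by staying within the classical Jordan--Manning--Wielandt arguments rather than invoking a blanket classification of highly transitive groups.
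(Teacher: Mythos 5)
The paper offers no proof of this lemma to compare against: it is quoted directly from the literature (Wielandt, Theorem 13.8; Cameron, Theorem 4.11). Judged on its own terms, your sketch correctly identifies the relevant circle of ideas -- the support $\Gamma$ of $\sigma$ is a Jordan set, and Jordan's theory should yield high transitivity -- but the two steps that carry all the weight are both left open or wrong. The inductive descent is the first gap. Jordan's theorem in the form you need (primitive group with a Jordan set $\Gamma$ whose complement $\Delta$ has size $k$ is $(k+1)$-transitive) requires the pointwise stabilizer $G_{(\Delta)}$ to act \emph{primitively} on $\Gamma$: that primitivity is precisely what is used to show that $G_\alpha$ stays primitive on $\Omega\setminus\{\alpha\}$ at each step. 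The only subgroup of $G_{(\Delta)}$ you control is $\langle\sigma\rangle$, which acts \emph{regularly} on $\Gamma$ and hence primitively only when $m$ is prime; for composite $m$ the orbits of $\langle\sigma^d\rangle$, $d\mid m$, are nontrivial blocks, and the descent does not get started. The prime case with at least three fixed points is Jordan's classical theorem, so composite $m$ is exactly the content of the cited result; deferring it to ``what is packaged into Wielandt's Theorem 13.8'' is circular.

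The endgame is the second gap. The assertion that a $6$-transitive group of degree $n$ contains $A_n$ because the Mathieu groups are at most $5$-transitive is a consequence of the classification of finite simple groups; it was not available classically (pre-classification one had only conditional statements, e.g.\ Wielandt's result assuming Schreier's conjecture), so it contradicts your stated aim of staying within the Jordan--Manning--Wielandt arguments. The classical finish instead manufactures an element of very small support -- a $3$-cycle or transposition -- by playing $\sigma$ against a conjugate whose support overlaps $\Gamma$ in all but one point, and then applies the elementary fact that a primitive group containing a $3$-cycle contains $A_n$. Your alternative ending via minimal degree is not available either: $m\le n-5$ allows $m$ to be as large as $n-5$, which is not ``small relative to $n$'', so Bochert-type minimal-degree bounds say nothing here. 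As it stands the proposal is an annotated citation rather than a proof.
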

\begin{lem} \label{Feit} \emph{ (\cite[Lemma 3.5]{bp})}
Let $H\leq S_n$ be a primitive group containing an $n$-cycle. Then the following holds:
\begin{itemize}\item[a)] If $H$ is simply transitive or solvable, then $H\leq \mathrm{AGL}_1(p)$ with $n=p$ a prime, or $H=S_4$ with $n=4.$
\item[b)] If $H$ is nonsolvable
and doubly transitive, then one of the following holds:
\begin{itemize}
\item[i)]
 $H=S_n$ for some $n\geq5$, or $H=A_n$ for some odd $n\geq5$;
\item[ii)] $\mathrm{PGL}_d (q)\leq H\leq \mathrm{P\Gamma L}_d(q)$ and $H$ acts on $n=(q^d -1)/(q -1)$ points or hyperplanes, where $d\geq 2$ and $q$ is a prime power;
\item[iii)] $H= \mathrm{PSL}_2(11),\  M_{11}$ or $M_{23}$ with $n= 11, 11$ or $23$ respectively.
\end{itemize}
\end{itemize}

\end{lem}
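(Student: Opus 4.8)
The plan is to read the hypothesis ``$H$ primitive and containing an $n$-cycle $c$'' as the statement that $H$ contains the regular cyclic subgroup $\langle c\rangle\cong C_n$, and then to derive the classification from the known theory of primitive groups with a regular cyclic subgroup. The first step I would take is a reduction to the doubly transitive case. By Schur's theorem on $B$-groups, a cyclic group of composite order is a $B$-group, so if $n$ is composite then any primitive $H\supseteq\langle c\rangle$ is automatically $2$-transitive; if $n=p$ is prime, Burnside's theorem on transitive groups of prime degree forces $H$ to be either $2$-transitive or contained in $\mathrm{AGL}_1(p)$. Consequently a primitive group that is only simply transitive must have prime degree and sit inside $\mathrm{AGL}_1(p)$, which already yields the first alternative of part (a). Everything that remains is $2$-transitive.

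I would then split the $2$-transitive groups into the solvable (affine) ones and the nonsolvable ones, invoking the classification of finite $2$-transitive groups, which rests on CFSG. For the solvable $2$-transitive groups I would run through the known list of solvable affine $2$-transitive groups of degree $p^d$ and ask which possess a regular cyclic subgroup $C_{p^d}$; once the prime-degree subgroups of $\mathrm{AGL}_1(p)$ are absorbed into part (a), the only surviving prime-power-degree example is $S_4$ on four points, whose $4$-cycle generates a regular $C_4$, completing part (a).

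For the nonsolvable $2$-transitive groups I would go family by family through the almost simple list and decide in each case whether an $n$-cycle occurs. The alternating socle gives exactly $H=S_n$, or $H=A_n$ with $n$ odd, since an $n$-cycle is even precisely when $n$ is odd; this is (b)(i). For the projective groups, a Singer cycle of $\mathrm{GL}_d(q)$ projects to an element acting as a single $\bigl((q^d-1)/(q-1)\bigr)$-cycle on the points, equivalently the hyperplanes, of projective space, so every $H$ with $\mathrm{PGL}_d(q)\leq H\leq \mathrm{P\Gamma L}_d(q)$ qualifies, giving (b)(ii); here one must also check that no group with $\mathrm{PSL}_d(q)\leq H$ but $\mathrm{PGL}_d(q)\not\leq H$ contributes a new degree. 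The remaining almost simple $2$-transitive families --- symplectic groups $\mathrm{Sp}_{2d}(2)$, unitary groups, Suzuki and Ree groups, and the sporadic actions --- are then examined individually: an arithmetic comparison of the possible element orders, and their cycle shapes, with the degree eliminates all of them except $\mathrm{PSL}_2(11)$ and $M_{11}$ on $11$ points and $M_{23}$ on $23$ points, which is (b)(iii).

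The main obstacle is precisely this last bookkeeping over the CFSG-based list: proving the nonexistence of an $n$-cycle in each remaining $2$-transitive family requires controlling the cycle types of elements of large order, since for existence in the projective case one has the explicit Singer cycle, but for nonexistence one needs to rule out that any element acts as a full cycle. I would therefore organize the argument around the maximal order of a fixed-point-free element and compare it with $n$, disposing of the finitely many sporadic and small-rank exceptions by direct inspection of their element orders or permutation characters.
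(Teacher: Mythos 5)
The paper does not prove this lemma at all: it is quoted verbatim from \cite[Lemma 3.5]{bp}, where it is itself a restatement of the classical theorem of Feit (with the verification completed by Jones) classifying primitive permutation groups containing a regular cyclic subgroup. So you are not diverging from the paper's argument --- you are reconstructing the argument that the paper deliberately black-boxes, and your reconstruction follows the standard route from the literature. The skeleton is correct: Schur's theorem that a cyclic group of composite order is a $B$-group reduces composite degree to the $2$-transitive case; Burnside's theorem on transitive groups of prime degree produces the $\mathrm{AGL}_1(p)$ alternative in (a); Huppert's list of solvable $2$-transitive groups leaves $S_4$ as the only solvable $2$-transitive survivor with a full cycle; and the CFSG list of almost simple $2$-transitive groups, filtered by the existence of an element acting as a single $n$-cycle (Singer cycles for the projective families, the parity of an $n$-cycle for the alternating socle, element orders and cycle shapes for the rest), yields (b)(i)--(iii). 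The caveat is that the decisive content of your plan is deferred: ruling out full cycles in $\mathrm{Sp}_{2d}(2)$, the unitary, Suzuki and Ree families and the remaining sporadic actions, and checking that overgroups of $\mathrm{PSL}_d(q)$ not containing $\mathrm{PGL}_d(q)$ contribute nothing new when $\gcd(d,q-1)>1$, is where essentially all the work lies, and as written it is a program rather than a proof. As a comparison with the paper --- which only cites the result --- this is a faithful and correct account of how the quoted lemma is actually established; as a standalone proof it would need the case-by-case verification carried out.
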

\begin{lem}\label{nminusone} \emph{ (\cite[Lemma 3.8]{bp})}
Let $H$ be a group such that $\mathrm{PSL}_d(q)\leq H\leq \mathrm{P\Gamma L}_d(q),$ where $d\geq 2$ and $q$ is a prime power. Then $H,$ in its action on the  $n=(q^d-1)/(q-1)$ points or hyperplanes, contains an $(n-1)$-cycle if and only if $d=2$ and either $q$ is a prime or $(q,H)=(4,\mathrm{P\Gamma L}_2(4))$.
\end{lem}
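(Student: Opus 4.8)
The plan is to separate the two projective dimensions, after observing that the inverse--transpose automorphism of $\mathrm{PGL}_d(q)$ interchanges the action on the $n$ points of $\mathrm{PG}(d-1,q)$ with the action on its hyperplanes; hence it is enough to treat the action on points. In either action an $(n-1)$-cycle fixes exactly one object and therefore lies in its stabiliser, and the negative conclusions will only use $H\le\mathrm{P\Gamma L}_d(q)$ while the positive ones will exhibit a cycle already inside $\mathrm{PSL}_d(q)$ or inside the prescribed $H$.

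For $d=2$ we have $n=q+1$, so an $(n-1)$-cycle is a $q$-cycle fixing a single point $P$; it lies in $\mathrm{Stab}(P)=\mathrm{A\Gamma L}_1(q)$ acting naturally on the remaining $q$ points, identified with $\mathbb{F}_q$, and there it generates a regular cyclic group of order $q=p^f$. I would first bound the order of a $p$-element $g$ of $\mathrm{A\Gamma L}_1(q)$. The translation subgroup $T\cong(\mathbb{F}_q,+)$ is normal with quotient $\mathrm{\Gamma L}_1(q)=C_{q-1}\rtimes C_f$, in which $C_{q-1}$ is a normal $p$-complement; hence $g$ is conjugate to a map $x\mapsto\psi(x)+b$ with $\psi$ a power of the Frobenius of $p$-power order $e\mid p^{v_p(f)}$. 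A direct computation gives $g^{e}(x)=x+\sum_{i=0}^{e-1}\psi^i(b)$, a translation, so $g^{ep}=1$ and $|g|\mid ep\le p^{\,1+v_p(f)}$. A $q$-cycle needs $|g|=p^f$, forcing $f\le 1+v_p(f)$, an inequality that holds only for $f=1$ (that is, $q$ prime) and for $(p,f)=(2,2)$ (that is, $q=4$). For the converse, when $q=p$ the translation $x\mapsto x+1$ is a single $p$-cycle lying in $\mathrm{PSL}_2(p)$, so every admissible $H$ contains an $(n-1)$-cycle; when $q=4$ an element of order $4$ on $4$ points is automatically a $4$-cycle, but being an odd permutation it lies in $\mathrm{P\Gamma L}_2(4)\cong S_5$ and not in $\mathrm{PGL}_2(4)=\mathrm{PSL}_2(4)\cong A_5$, and since these are the only groups in the allowed range the cycle occurs exactly for $H=\mathrm{P\Gamma L}_2(4)$.

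For $d\ge 3$ I would prove that no $(n-1)$-cycle exists. Assuming $g$ is one, with unique fixed point $P=\langle v_0\rangle$, put $n'=(q^{d-1}-1)/(q-1)$ and $g_p=g^{\,n'}$, an element of order $q$ whose cycle type consists of the fixed point $P$ together with $n'$ cycles of length $q$. The $n'$ lines through $P$ partition the $n-1$ points different from $P$ into $n'$ blocks of size $q$, and this partition is invariant under $g\in\mathrm{Stab}(P)$; since $\langle g\rangle\cong C_{n-1}$ is regular on these points and a cyclic group has a unique block system of each feasible block size, the partition must agree with the orbit partition of the unique subgroup of order $q$, namely $\langle g_p\rangle$. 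Thus $g_p$ stabilises every line through $P$. Writing a lift as $\hat g_p(v)=A(v^{\sigma})$, invariance of each $2$-space $\langle v_0,w\rangle$ yields $\hat g_p(w)=\lambda(w)\,w+\mu(w)\,v_0$; comparing $\hat g_p(w_1+w_2)$ for independent $w_1,w_2$ (available because $d\ge3$) forces $\lambda$ to be a constant $\lambda_0\ne 0$, and then comparing $\hat g_p(\alpha w)$ with its semilinear value forces $\alpha^{\sigma}=\alpha$ for all $\alpha$, so $\sigma=\mathrm{id}$ and $\mu$ is $\mathbb{F}_q$-linear. As $\dim\ker\mu\ge d-1\ge 2$, there is $w\notin\langle v_0\rangle$ with $\mu(w)=0$, giving a second fixed point $\langle w\rangle\ne P$ and contradicting the cycle type of $g_p$.

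The routine part is the $d=2$ order estimate; the crux, and the step I expect to be most delicate, is the $d\ge 3$ argument, specifically the identification of the line-through-$P$ partition with the orbit partition of $g_p$ (which rests on the uniqueness of block systems for a cyclic regular group) and the ensuing linear-algebra squeeze that simultaneously kills the field automorphism and produces the forbidden extra fixed point. A careful treatment must also confirm that the hyperplane action contributes nothing new beyond the point action, which the duality reduction secures at the outset.
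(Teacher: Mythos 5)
The paper does not actually prove this statement: it is imported verbatim as \cite[Lemma 3.8]{bp}, so there is no internal proof to compare yours against. Judged on its own, your argument is correct and self-contained, and it is a genuinely different service to the reader than the paper provides. The $d=2$ reduction to a $p$-element of $\mathrm{A\Gamma L}_1(q)$, the computation $g^{e}(x)=x+\sum_{i=0}^{e-1}\psi^{i}(b)$ giving $|g|\mid ep$, and the resulting inequality $f\le 1+v_p(f)$ (whose only solutions are $f=1$ and $(p,f)=(2,2)$) are all right, as is the $A_5$ versus $S_5$ parity discussion for $q=4$ and the observation that the translation $x\mapsto x+1$ lies in $\mathrm{PSL}_2(p)$. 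The $d\ge 3$ argument is the nice part: identifying the partition of the $n-1$ non-fixed points by lines through $P$ with the orbit partition of $\langle g^{n'}\rangle$, via uniqueness of block systems for a regular cyclic group, is exactly what is needed to force $g_p$ to be a central collineation, and the linear-algebra finish (rank of $A-\lambda_0 I$ at most one, hence a second fixed point) is a correct ad hoc proof of the relevant instance of Baer's theorem on perspectivities.

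Three small points to tighten. First, $C_{q-1}$ is a normal $p$-complement in $C_{q-1}\rtimes C_f$ only when $f$ is a $p$-power; what you actually use is that $C_{q-1}$ is a normal Hall $p'$-subgroup of $C_{q-1}\rtimes C_{p^{v_p(f)}}$, so that Schur--Zassenhaus conjugates your $p$-element into the Galois complement. Second, in the step forcing $\lambda$ constant you need $v_0,w_1,w_2$ linearly independent (not merely $w_1,w_2$), both so that the coefficient comparison is legitimate and so that $w_1+w_2\notin\langle v_0\rangle$; you should also add the one-line connectivity argument covering pairs $w,w'$ with $w'\in\langle v_0,w\rangle$, which again uses $d\ge 3$. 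Third, $\mu$ is defined a priori only off $\langle v_0\rangle$; the clean statement is that $B=A-\lambda_0 I$ maps a spanning set into $\langle v_0\rangle$, hence has rank at most one, which yields the kernel bound you want. None of these affects the validity of the proof.
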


\begin{prop}\label{10,14} $\gamma(S_{10})=\gamma'(S_{10})=3$ and $\gamma(S_{14})=\gamma'(S_{14})=4.$ Moreover, for $n\in\{10,14\}$,  no minimal basic set of $S_n$ contains $P_2$.
\end{prop}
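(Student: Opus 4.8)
The plan is to settle the numerical values first and then to prove the ``no $P_2$'' assertion by a careful bookkeeping of which maximal subgroups can host a handful of well-chosen cycle types. For the values, observe that $10=2\cdot 5$ and $14=2\cdot 7$ are both of the form $2p$ with $p$ an odd prime, so \cite[Proposition 7.6]{bp} gives a minimal basic set of size $g(n)$, whence $\gamma(S_{10})=g(10)=3$ and $\gamma(S_{14})=g(14)=4$. Since $\gamma(S_n)\le\gamma'(S_n)$ always and $\gamma'(S_n)\le g(n)$ by Proposition \ref{gamma'-g}, we get $\gamma'(S_{10})=3$ and $\gamma'(S_{14})=4$ at once (equivalently, quote Corollary \ref{equality}).

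For the remaining assertion, suppose for contradiction that $\delta$ is a minimal basic set with maximal components and $P_2\in\delta$, so $|\delta|=g(n)$. The argument rests on three membership tests for a type $T=[x_1,\dots,x_k]\in\mathcal{T}(n)$. First, $T\in P_2$ iff $T$ has at least two fixed points or at least one $2$-cycle, so the types \emph{not} in $P_2$ are exactly those with at most one fixed point and no $2$-cycle. Second, $T\in P_x$ iff some sub-multiset of the parts sums to $x$; $T\in S_2\wr S_{n/2}$ iff every odd number occurs as a part an even number of times; and $T\in S_{n/2}\wr S_2$ iff the parts split into two sub-multisets each of sum $n/2$ or all parts are even (these follow from Corollary \ref{wreathbis} and a direct analysis of the block action). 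Third, by Lemmas \ref{Feit} and \ref{nminusone} the only proper primitive subgroups relevant in these degrees are those with $\mathrm{PGL}_2(9)\le H\le\mathrm{P\Gamma L}_2(9)$ for $n=10$ and $\mathrm{PSL}_2(13)\le H\le\mathrm{PGL}_2(13)$ for $n=14$, Lemma \ref{extract} disposing of the residual possibilities; crucially, by Lemma \ref{nminusone} the $(n-1)$-cycle is \emph{absent} from every proper primitive subgroup when $n=10$ (as $9$ is not prime) but \emph{present} in $\mathrm{PGL}_2(13)$ when $n=14$ (as $13$ is prime). With these tests in hand, each witness type below is assigned its exact list of maximal hosts by a short order-divisibility check.

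I then split on whether $A_n\in\delta$. Suppose first $A_n\notin\delta$. For $n=10$, the even types $[3,7]$ and $[1,9]$ lie only in $P_3$ and $P_1$ (they avoid both wreath subgroups by the second test and the primitive ones by element orders), hence force $P_3,P_1\in\delta$, while the transitive $10$-cycle $[10]$ forces one of $S_2\wr S_5,\ S_5\wr S_2,\ \mathrm{PGL}_2(9)$-type into $\delta$; together with $P_2$ this yields four distinct components, contradicting $|\delta|=3$. For $n=14$, the coprime types $[3,11],[5,9]$ force $P_3,P_5\in\delta$, so $\delta=\{P_2,P_3,P_5,W\}$; the $14$-cycle forces $W\in\{S_2\wr S_7,\,S_7\wr S_2,\,\mathrm{PGL}_2(13)\}$, and $[1,13]$ (which lies only in $P_1$, in $A_{14}$, and in $\mathrm{PGL}_2(13)$ by Lemma \ref{nminusone}) forces $W=\mathrm{PGL}_2(13)$; but then $[4,10]$ lies in none of $P_2,P_3,P_5,\mathrm{PGL}_2(13)$, since its order $20$ does not divide $|\mathrm{PGL}_2(13)|$, a contradiction. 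Suppose instead $A_n\in\delta$. Then the remaining $g(n)-1$ components must cover all odd types outside $P_2$, as these lie neither in $A_n$ nor in $P_2$. For $n=10$ the single remaining component would have to contain $[10],[4,3,3]$ and $[5,4,1]$ simultaneously, yet the only hosts of $[10]$ are $S_2\wr S_5,\ S_5\wr S_2$ and the $\mathrm{PGL}_2(9)$-type groups, each failing to contain $[4,3,3]$ (not in $S_5\wr S_2$; order $12$ excludes the primitive case) or $[5,4,1]$ (not in $S_2\wr S_5$; order $20$ excludes the primitive case). For $n=14$ there are two remaining components: the incompatible pair $[14]$ (hosted only by $S_2\wr S_7,\ S_7\wr S_2,\ \mathrm{PGL}_2(13)$) and $[1,4,9]$ (hosted only by $P_1,P_4,P_5$) forces one component into each list, then $[3,5,6]$ (hosted only by $P_3,P_5,P_6$) forces the intransitive one to be $P_5$, and finally $[1,3,10]$ (hosted only by $P_1,P_3,P_4$) is covered by neither, a contradiction. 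In every case $P_2$ cannot lie in a minimal basic set.

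The main obstacle is the third membership test: determining precisely which proper primitive subgroups occur in degrees $10$ and $14$ and which cycle types they realize. Lemmas \ref{extract}, \ref{Feit} and \ref{nminusone} reduce this to the two $\mathrm{PGL}_2$ families, after which the witnesses are deliberately chosen so that every primitive (non)containment collapses to a divisibility condition on element orders; the qualitative contrast between $n=10$ and $n=14$, namely whether the $(n-1)$-cycle appears in a proper primitive group, is exactly what forces the two degrees to use slightly different witness types.
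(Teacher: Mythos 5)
Your treatment of the ``no $P_2$'' assertion is essentially the paper's own argument: the same case split on whether $A_n$ is a component, the same reliance on Lemmas \ref{extract}, \ref{Feit} and \ref{nminusone}, and witness types that differ only cosmetically from the paper's (e.g.\ you use $[1,4,9]$ and $[4,10]$ where the paper uses $[1,4,9]$, $[1,1,12]$ and $[4,10]$; your component-counting for $n=10$, $A_{10}\notin\delta$ is a harmless repackaging of the paper's identification of $K$ with $\mathrm{P\Gamma L}_2(9)$). That part is fine, modulo the usual caveat that claims of the form ``$[1,9]$ lies \emph{only} in $P_1$'' tacitly use the list of primitive groups of degree $10$; the paper sidesteps this by always first pinning down the transitive component via the $n$-cycle and Lemma \ref{Feit}.

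The genuine gap is in your very first step: you assert that $\gamma(S_{14})=g(14)=4$ follows from \cite[Proposition 7.6]{bp} because $14=2\cdot 7$. That proposition only supplies a basic set of size $g(2p)=\frac{p+1}{2}$, i.e.\ the \emph{upper} bound $\gamma(S_{14})\le 4$; minimality of such basic sets is not known for even $n$ in general (the paper is explicit that no exact formula for $\gamma(S_n)$ is known for even $n$, and that the literature only gives $3\le\gamma(S_{14})\le 4$). Establishing the lower bound $\gamma(S_{14})\ge 4$ is precisely the new and nontrivial content of the $n=14$ case: the paper argues that a hypothetical basic set of size $3$ would, by intersecting with $A_{14}$, contradict $\gamma(A_{14})=4$ unless $A_{14}$ itself is a component, and then rules out $\delta=\{A_{14},H,K\}$ by a separate witness-type analysis (using $[1,2,11]$, $[3,6,5]$ to force $P_3\in\delta$ and then $[1,1,12]$, $[1,4,9]$ to contradict Lemma \ref{extract}). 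Your proof omits this entirely, and since your $\gamma'(S_{14})=4$ and your appeal to Corollary \ref{equality} both rest on $\gamma(S_{14})=g(14)$, the omission propagates through the whole $n=14$ statement. (For $n=10$ the value $\gamma(S_{10})=3$ is at least citable from \cite[Table 1]{bps}, though your stated justification via minimality in \cite[Proposition 7.6]{bp} is the same unjustified inference.)
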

\begin{proof} Let $n=10$. By \cite[Table 1]{bps}, we have that $\gamma(S_{10})=3=g(10)$ and thus, by Corollary \ref{equality}, we also have $\gamma'(S_{10})=3.$
Assume that there exists a minimal basic set of $S_{10}$ of type $\delta=\{P_2,H,K\}$ for some $H,K$ maximal subgroups of $S_{10}$ and $K$ transitive with $[10]\in K.$
If $A_{10}\in \delta$, then $H=A_{10}$. Consider the types $T_1=[1,4,5] $ and $T_2=[1,3,6]$. They do not belong to $P_2$ and to $A_{10}$, so they must belong to $K.$ Then $K\neq A_{10}$ contains also the type $[1^6,4]$ and so, by Lemma \ref{extract}, $K$ is imprimitive. Thus $K\in\{S_2\wr S_5, S_5\wr S_2\}$ and so $T_2$ does not belong to $K,$ a contradiction.
Assume next that $A_{10}\notin \delta$ and consider the types $T_1=[1,9], T_2=[3,7].$ Since $\gcd(3,7)=1$, by Lemma 5.2 in \cite{bp}, the only maximal subgroup of $S_{10}$,  containing a permutation of  type $T_2$ and different from $A_{10}$  is $P_3$. Thus $H=P_3.$ Since $10$ is not a prime,
 by Lemma \ref{Feit}, we deduce that $K$ is $2$-transitive. Moreover, since the only way to write $10=(q^d -1)/(q -1)$ is $10=9+1$,
 we have that $K=\mathrm{P\Gamma L}_2(9).$ On the other hand, by Lemma \ref{nminusone}, since $9$ is not a prime, such $K$ cannot contain the type $T_1$.
Since  $T_1$ does not belong also to $P_2, P_3$, we have a contradiction.

Let now $n=14$. We know from  \cite[Proposition 7.6]{bp} and from \cite[Proposition 3.4]{BBH} that $\gamma(A_{14})=4$  and that $3\leq\gamma(S_{14})\leq 4=g(14).$
We first show that $\gamma(S_{14})= 4.$
Assume, by contradiction,  that $\gamma(S_{14})=3.$ Then a minimal normal covering of $S_{14}$ must involve $A_{14}$ as a component otherwise, by intersection, we would get a normal covering of size $3$ for $A_{14}$ contradiction $\gamma(A_{14})=4.$ Let then $\delta=\{A_{14},\ H,\ K\}$ be a basic set for $S_{14}$, with $H,K$ maximal subgroups of $S_{14}$  and $K$ transitive with $[14]\in K.$ By Lemma \ref{extract}, the  types $[1,2,11],\ [3,6,5]$ cannot belong  to  a primitive subgroup of $S_{14}$. But clearly they do not belong to either of the two imprimitive maximal subgroups of $S_{14}$,  which are $S_2\wr S_7$ and $S_7\wr S_2$. So the only maximal subgroups containing them are intransitive. Since there is at most one intransitive component in $\delta$, we deduce that  $P_3\in \delta$. Since no permutation of type $T_1=[1,1,12],\ T_2=[1,4,9]$ belongs to $A_{14}$ and to $ P_3$, we deduce that $T_1,T_2\in K$ and thus $K\neq A_{14}$ is $2$-transitive and contains a $4$-cycle, contracting Lemma \ref{extract}.

By Corollary \ref{equality}, we immediately have $\gamma'(S_{14})= 4.$ We are left with showing that there exists no minimal basic set of $S_{14}$ containing $P_2$. By contradiction, assume that there exists a minimal basic set of $S_{14}$ of type $\delta=\{P_2,H,K,L\}$ for some $H,K,L$ maximal subgroups of $S_{14}$ and $L$ transitive with $[14]\in L.$ If $A_{14}\in \delta$, then let $H=A_{14}$ and note that  in $\delta$, other than $P_2$, there is  at most one intransitive component.
Consider the types $T_1=[1,3,10] $ , $T_2=[3,5,6]$ and  $T_3=[1,5,8]$. They do not belong to $P_2$ and to $A_{14}$ and, by Lemma \ref{extract}, they do not belong to primitive subgroups. On the other hand they also do not belong to $S_2\wr S_7$ and $S_7\wr S_2$. Thus they require an intransitive component. But there exists no intransitive maximal subgroup of $S_{14}$  containing all the $T_i$, for $i\in \{1,2,3\}.$
Assume next that $A_{14}\notin \delta$ and consider the types $T_1=[1,13], T_2=[3,11], T_3=[5,9], T_4=[4,10]$ Since $\gcd(3,11)=\gcd(5,9)=1$, by Lemma 5.2 in \cite{bp}, the only maximal subgroup of $S_{14}$,  containing a permutation of  type $T_2$ and different from $A_{14}$  is $P_3$. So $P_3\in \delta.$ Similarly, dealing with $T_3,$ we get that $P_5\in \delta$. Thus, we have $\delta=\{P_2,P_3,P_5, L\}$ and necessarily $T_1\in L.$ Thus, by Lemma \ref{Feit}, we get $L=\mathrm{PGL_2(13)}$. But then $5\nmid |L|$ and so no component of $\delta$ contains $T_4$ because a permutation of type $T_4$ has order $20.$
\end{proof}
The above result and Proposition \ref{odd} inspire the following
\begin{quesG2} {\em Do there exist infinitely many even $n\in\mathbb{N}$, such that no minimal basic set of $S_n$ admits $P_2$ as a component?}
\end{quesG2}

%  We emphasize that both of our examples of $n$ for which no minimal basic set of $S_n$ contains $P_2$, are of the form $n=2q$ for $q$ an odd prime, and we know of no such $n$ which are not of this form.  It appears difficult to show that the family $n=2q$ solves the Group-theoretic Question 2, judging from the complexity of the proof of Proposition \ref{10,14}.
\vskip 1em

\subsection{The $h$ bound}\label{Maroti}

In this section we produce a second collection of upper bounds for $\gamma'(S_n)$, using the special basic set $\delta_1$ below, communicated to the first author by Attila Mar\'oti \cite {ma}\footnote{Mar\'oti's motivation was his proof that there are infinitely many $n$ for which $\gamma(S_n)< g(n)$, refuting \cite[Conjecture 1]{bps} which states that, for $\nu(n)\geq 2$ and $n\neq p_1p_2$, $\gamma(S_n)=g(n)$.}, and a further  basic set $\delta_2$, which turns out to be particularly useful when $n$ is odd.
These new bounds improve the
$g(n)$ bound only when $\nu(n)$ is large and $n$ is odd.
To appreciate the order of magnitude of the new bounds, we recall a number theoretical  result from \cite{bp}. For $f,\ g$ real functions defined over an upper unbounded domain, write
 $f\sim g$ if $\displaystyle{\lim_{x\rightarrow +\infty}\frac{f(x)}{g(x)}} = 1.$
Let $n\in \mathbb{N}$ and let $0\leq x<y\leq n$, with $x,y\in \mathbb{R}$. For any interval $I$ with extremes $x$ and $y$, define
$$
\phi(I;n)=|\{ i\in \mathbb{N}\ :\ i\in I,\ (i,n)=1  \}|.
$$

By \cite[Lemma 2.3 ]{bp}, we then have that  if $y-x\sim c\,n$ for some  $c>0$, then
$
\phi(I;n)\sim c\phi(n).
$

\begin{prop} \label{newbound} Let $n\in\mathbb{N}, n\geq 6$, not a prime. Then:
\begin{itemize}
\item[i)]{\rm (Mar\'oti)}  $$\delta_1=\{P_x :1\leq x\leq n/3\}\cup \{P_x:n/3< x<n/2,\,\gcd(x,n)=1\} \cup$$ $$\{S_{p_i}\wr S_{n/p_i}: i\in \{1,\dots,\nu(n)\}\}$$ is a special basic set of size $$\lfloor n/3\rfloor +\nu(n)+\phi((n/3,n/2);n)\sim  n/3 +\nu(n)+\phi(n)/6;$$
\item[ii)] if $n$ is odd, then $$\delta_2=\{P_x :1\leq x \leq n/4\}\cup \{P_x: n/4< x<n/2,\,\gcd(x,n)=1\} \cup$$$$\,\{S_{p_i}\wr S_{n/p_i}: i\in \{1,\dots,\nu(n)\}\}\cup \{A_n\}$$ is a special basic set of size $$\lfloor n/4\rfloor +\nu(n)+\phi((n/4,n/2);n)+1\sim  n/4+\nu(n)+\phi(n)/4.$$
\end{itemize}
\end{prop}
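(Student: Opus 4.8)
The plan is to verify, for each of the two sets $\delta_1$ and $\delta_2$, the four things a special basic set requires: that the listed subgroups are pairwise non-conjugate proper subgroups, that their conjugates cover $S_n$ (the basic-set property), that the covering is special, and that the cardinality is as stated. The organizing observation I would make at the outset is that the \emph{special} upgrade is essentially free here. As noted in the introduction, every $M=\langle\sigma\rangle\times\langle\tau\rangle\in\mathcal{M}(S_n)$ fixes the pair $\{i,j\}$ set-wise and hence lies in a conjugate of $P_2$; since $2\le\lfloor n/3\rfloor$ for all $n\ge 6$ and $2\le\lfloor n/4\rfloor$ for the odd non-prime $n\ge 9$ relevant to (ii), the component $P_2$ belongs to both $\delta_1$ and $\delta_2$. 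Thus once each set is shown to be a basic set, every cyclic subgroup is covered by the basic-set property and every special metacyclic subgroup is covered by the conjugates of $P_2$, so the set is automatically special. The real content is therefore the covering of all types, together with the counting.

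For the basic-set property of $\delta_1$ I would argue by a dichotomy on a type $T=[x_1,\dots,x_k]$ of $S_n$. If $T$ admits a nonempty proper sub-sum $c=\sum_{i\in S}x_i$ with $\min\{c,n-c\}\le n/3$, then a permutation of type $T$ stabilizes the union of the corresponding orbits, a set of size $c$, and so is covered by the intransitive component $P_x$ with $x=\min\{c,n-c\}\le\lfloor n/3\rfloor$. Otherwise every single part, being itself a proper sub-sum when $k\ge 2$, lies in the open band $(n/3,2n/3)$, which forces $k\le 2$ (since $k$ parts each exceeding $n/3$ would sum to more than $n$). If $k=1$ then $T=[n]$ is an $n$-cycle; as $n$ is not prime the smallest prime $p_1\mid n$ divides the single part $n$, so by Corollary~\ref{wreathbis} it is covered by $S_{p_1}\wr S_{n/p_1}$. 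If $k=2$, write $T=[a,b]$ with $a\le b$, so that $n/3<a\le n/2$. When $\gcd(a,n)=1$ we have $a\ne n/2$, hence $a\in(n/3,n/2)$ and $P_a$ lies in the medium family; when $\gcd(a,n)>1$, any prime $p\mid\gcd(a,n)$ also divides $b=n-a$, so $p$ divides both parts and Corollary~\ref{wreathbis} places $T$ in the wreath component $S_p\wr S_{n/p}=S_{p_i}\wr S_{n/p_i}$.

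For $\delta_2$, with $n$ odd and not prime, I would run the same dichotomy but with threshold $n/4$ in place of $n/3$, so that the failure of a small sub-sum now forces every part into $(n/4,3n/4)$ and hence $k\le 3$. The cases $k=1$ and $k=2$ are handled exactly as above, via a wreath component for the $n$-cycle and via either a coprime medium $P_a$ (with $a\in(n/4,n/2)$, $\gcd(a,n)=1$) or a shared prime factor for a two-part type. The genuinely new case is $k=3$: the three parts all lie in $(n/4,n/2)$, and a single part need be neither $\le n/4$ nor coprime to $n$ nor share a common prime factor with the others, so no $P_x$ and no single wreath need apply. The point is that for $n$ odd and $k=3$ one has $n-k$ even, so a permutation of type $T$ has sign $(-1)^{n-k}=+1$ and is therefore covered by the adjoined component $A_n$. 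This parity observation, which is exactly why $A_n$ is adjoined and why (ii) restricts to odd $n$, is the main obstacle I would anticipate: it is the one step where the naive ``reduce to a coprime part or a common prime factor'' strategy can fail and must be replaced by the $A_n$ cover.

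Finally, for the cardinalities I would count the small family as the $\lfloor n/3\rfloor$ (resp. $\lfloor n/4\rfloor$) subgroups $P_1,\dots,P_{\lfloor n/3\rfloor}$, the medium family as exactly $\phi\bigl((n/3,n/2);n\bigr)$ (resp. $\phi\bigl((n/4,n/2);n\bigr)$) subgroups by the very definition of $\phi$, and the wreath family as $\nu(n)$ subgroups, adjoining $A_n$ in (ii). Pairwise non-conjugacy is immediate, since distinct admissible $x<n/2$ give non-conjugate intransitive $P_x$, the wreaths are transitive with distinct block sizes $p_i$, and $A_n$ is normal; properness holds because every prime factor of the non-prime $n$ satisfies $p_i\le n/2$. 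The asymptotics then follow from \cite[Lemma 2.3]{bp}: the interval $(n/3,n/2)$ has length $n/6\sim(1/6)n$, giving $\phi\bigl((n/3,n/2);n\bigr)\sim\phi(n)/6$, while $(n/4,n/2)$ has length $n/4$, giving $\phi\bigl((n/4,n/2);n\bigr)\sim\phi(n)/4$; combined with $\lfloor n/3\rfloor\sim n/3$ and $\lfloor n/4\rfloor\sim n/4$ these yield the stated equivalences $n/3+\nu(n)+\phi(n)/6$ and $n/4+\nu(n)+\phi(n)/4$.
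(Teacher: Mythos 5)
Your proposal is correct and follows essentially the same route as the paper: reduce the ``special'' condition to the basic-set condition via the component $P_2$ (which absorbs every $M\in\mathcal{M}(S_n)$), cover $n$-cycles and two-part types by Corollary \ref{wreathbis} or a coprime $P_x$, cover types with many parts by a small $P_x$, and in (ii) use the parity of $n-k$ to send three-part types into $A_n$; the counting and the asymptotics via \cite[Lemma 2.3]{bp} also match. Your ``no small sub-sum'' dichotomy is just a contrapositive reorganization of the paper's direct case split on the number of parts $k$.
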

\begin{proof}  First of all, note that the condition $n$ not a prime guarantees that, for every $i\in \{1,\dots,\nu(n)\},$  $S_{p_i}\wr S_{n/p_i}\in \mathcal{W}$.

i) Since $n\geq 6$, we have that $P_2\in \delta_1$ and thus, by Corollary B, it is enough to show that $\delta_1$ is a basic set. By Corollary \ref{wreathbis}, the $n$-cycles belong, up to conjugacy, to $S_{p_1}\wr S_{n/p_1}\in \delta_1$. Consider the types $T_x=[x,n-x]$, with $1\leq x\leq n/2$. If $\gcd(x,n)=1$, then $x\neq n/2$ and $T_x$
belong to $P_x\in \delta$ ; if $\gcd(x,n)\neq1,$ then there exists $p_i$ such that $p_i\mid x$ and so, by Corollary \ref{wreathbis}, $T_x$ belongs to $S_{p_i}\wr S_{n/p_i}.$ Finally let $T=[x_1,\dots,x_k]$, with $k\geq 3$. Then at least one term is less or equal to $n/3$ and thus $T$ belongs to $P_x$ for some $1\leq x\leq n/3.$

 ii) Since $n\geq 9$, we have that $P_2\in \delta_2$ and thus, by Corollary B,  it is enough to show that $\delta_2$ is a basic set. For the types $[n]$ and $T_x=[x,n-x]$, with $1\leq x\leq n/2,$ we argue as in i).  If $T=[x_1,x_2,x_3]$, then, being  $n$ odd, $T$ belongs to $A_n$. Finally if $T=[x_1,\dots,x_k]$, with $k\geq 4$, then at least one term is less than or equal to $n/4$, and thus $T$ belongs to $P_x$ for some $1\leq x\leq n/4.$

 \end{proof}
 \begin{defn}\label{h} Define the function $h:A\rightarrow \mathbb{N}$ by
\[h(n)=\begin{cases}
\ \lfloor n/3\rfloor +\nu(n)+\phi((n/3,n/2);n) &\quad \mbox{if $n$ is even}\\

\ \lfloor n/4\rfloor +\nu(n)+\phi((n/4,n/2);n)+1    &\quad \mbox{if  $n$ is odd}\\
\end{cases}
\]
\end{defn}
\begin{cor}\label{h-bound} Let $n\in A$. Then $\gamma'(S_n)\leq h(n).$
\end{cor}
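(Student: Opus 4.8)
The plan is to read the bound off directly from Proposition \ref{newbound} whenever it applies, and to dispatch the remaining degrees by the already-established inequality $\gamma'(S_n) \leq g(n)$ of Proposition \ref{gamma'-g}. The first observation is purely bookkeeping: the size of the special basic set $\delta_1$ in Proposition \ref{newbound}(i) is exactly the even branch of $h$, namely $\lfloor n/3 \rfloor + \nu(n) + \phi((n/3, n/2); n)$, while the size of $\delta_2$ in Proposition \ref{newbound}(ii) is exactly the odd branch, $\lfloor n/4 \rfloor + \nu(n) + \phi((n/4, n/2); n) + 1$. Hence, for every $n \geq 6$ that is not prime, I obtain $\gamma'(S_n) \leq h(n)$ at once: I invoke part (i) when $n$ is even and part (ii) when $n$ is odd, so that in each case a special basic set of cardinality $h(n)$ is exhibited.

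It remains to treat the degrees not covered by Proposition \ref{newbound}, i.e.\ $n = 4$ and the primes $p \geq 5$ (all of which lie in $A$ and are odd). For these I would not construct a covering by hand but simply compare $g$ with $h$, using $\gamma'(S_n) \leq g(n)$. For $n = 4$ one has $\nu(4) = 1$ and $\alpha_1 = 2$, so $g(4) = 2$; a direct evaluation of the even branch gives $h(4) = \lfloor 4/3\rfloor + 1 + \phi((4/3, 2); 4) = 1 + 1 + 0 = 2$, whence $\gamma'(S_4) \leq g(4) = 2 = h(4)$.

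For a prime $p \geq 5$ I would use the elementary counting identity $\lfloor p/4 \rfloor + \phi((p/4, p/2); p) = (p-1)/2$, which holds because every integer in the range $(p/4, p/2)$ is automatically coprime to $p$, so that the two index sets $\{1, \dots, \lfloor p/4 \rfloor\}$ and $\{i : p/4 < i < p/2\}$ are disjoint and together exhaust $\{1, \dots, (p-1)/2\}$. Since $\nu(p) = 1$, this yields $h(p) = (p-1)/2 + 2$, while $g(p) = \frac{p}{2}(1 - 1/p) = (p-1)/2$; thus $\gamma'(S_p) \leq g(p) = (p-1)/2 \leq h(p)$, with room to spare.

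The argument is almost entirely routine once Proposition \ref{newbound} is in hand; the only point demanding any care is the verification that the stated cardinalities of $\delta_1$ and $\delta_2$ coincide term-by-term with the two branches of $h$, together with the handling of the excluded degrees, where $\delta_1, \delta_2$ are not even defined since no wreath components $S_{p_i} \wr S_{n/p_i}$ exist in prime degree. I expect this bookkeeping, plus the small counting identity for primes, to be the only genuine content, with no new group-theoretic input required.
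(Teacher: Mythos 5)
Your proposal is correct and follows essentially the same route as the paper: apply Proposition \ref{newbound} for non-prime $n\geq 6$ (part (i) for $n$ even, part (ii) for $n$ odd), and handle the leftover degrees $n=4$ and $n=p$ prime by direct comparison, the only cosmetic difference being that you bound $\gamma'(S_n)$ by $g(n)$ there while the paper quotes the known exact values $\gamma'(S_4)=2$ and $\gamma'(S_p)=\frac{p-1}{2}$. Your counting identity $\lfloor p/4\rfloor+\phi((p/4,p/2);p)=\frac{p-1}{2}$ and the evaluation $h(4)=2$ are both accurate, so no gap remains.
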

\begin{proof} If $n\geq 6$ and $n$ is not a prime, then the result follows from Proposition \ref{newbound}. If $n=4$, note that $h(4)=2=\gamma'(S_4).$ If $n=p$ is a prime we have that $h(p)=2+\frac{p-1}{2}>\gamma'(S_p)=\frac{p-1}{2}.$
\end{proof}

Depending on $n,$ we may have, in principle, $h(n)>g(n)$ or $g(n)>h(n)$ and thus, correspondingly, either the bound expressed by  Proposition \ref{gamma'-g} or that expressed by Corollary \ref{h-bound} is more strict. The next proposition shows that the bound given by Proposition \ref{gamma'-g}  is always preferable when $n$ is even. Note also that $g(4)=h(4).$
\begin{prop}\label{comparison}  Let $n\in\mathbb{N}, n\geq 5.$ If $n$ is even, then $g(n)<h(n)$. If $n$ is odd, then there exists infinitely many $n$ such that $h(n)<g(n).$
\end{prop}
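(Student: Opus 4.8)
The plan is to treat the two assertions by entirely different methods: a uniform elementary estimate for the even statement, and an explicit infinite family for the odd one.

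\emph{Even case.} The key point is that for even $n\ge 6$ one has $p_1=2$, so in each of the three cases that can occur (namely $n=2^\alpha$, $n=2p_2$, and the remaining case) the leading term of $g(n)$ is at most $\tfrac n2\bigl(1-\tfrac12\bigr)=\tfrac n4$ and the additive constant is at most $2$; hence $g(n)<\tfrac n4+2$, strictly, since in each case either a factor $\bigl(1-\tfrac1{p_2}\bigr)<1$ is present or the constant equals $1$. For $h$ I would write $h(n)=\lfloor n/3\rfloor+\nu(n)+\phi((n/3,n/2);n)\ge \lfloor n/3\rfloor+\nu(n)$ and use the elementary inequality $\lfloor n/3\rfloor\ge n/4$, valid for all $n\ge 6$. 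When $\nu(n)\ge 2$ this gives $h(n)\ge \lfloor n/3\rfloor+2\ge \tfrac n4+2>g(n)$, settling that subcase. The only even $n$ with $\nu(n)=1$ are the powers $n=2^\alpha$, where $g(n)=\tfrac n4+1$; here $\lfloor n/3\rfloor>n/4$ strictly as soon as $\alpha\ge 4$, and the single boundary value $n=8$ (where $\lfloor 8/3\rfloor=2=n/4$) is dispatched by the contribution $\phi((8/3,4);8)=1$ coming from the integer $3$. Thus $g(n)<h(n)$ for every even $n\ge 6$.

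\emph{Odd case.} Here $h(n)<g(n)$ cannot hold for all odd $n$ — for $n$ prime the term $\phi((n/4,n/2);n)$ makes $h$ large — so I would exhibit a suitable infinite family instead. Let $q_1<q_2<\cdots$ be the odd primes and put $n_k=q_1q_2\cdots q_k=3\cdot 5\cdot 7\cdots q_k$ for $k\ge 3$. Since the two least prime divisors of $n_k$ are $3$ and $5$ and $\nu(n_k)=k\ge 3$, the last case of the definition of $g$ gives $g(n_k)=\tfrac{n_k}2\cdot\tfrac23\cdot\tfrac45+2=\tfrac{4n_k}{15}+2$. Using the crude bound $\phi((n_k/4,n_k/2);n_k)\le\phi(n_k)$ I get $h(n_k)\le \tfrac{n_k}4+k+\phi(n_k)+1$, whence
\[
 g(n_k)-h(n_k)\ \ge\ \frac{4n_k}{15}-\frac{n_k}{4}-k-\phi(n_k)+1\ =\ \frac{n_k}{60}-k-\phi(n_k)+1 .
\]
It remains to see that the lower-order terms are negligible against the linear gap $n_k/60$: indeed $k=\nu(n_k)=o(n_k)$, while $\phi(n_k)/n_k=\prod_{i=1}^k\bigl(1-\tfrac1{q_i}\bigr)\to 0$ by the divergence of $\sum 1/p$ (Mertens). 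Hence $k+\phi(n_k)<n_k/60$ for all large $k$, so $g(n_k)>h(n_k)$ for all large $k$, giving infinitely many odd $n$ with $h(n)<g(n)$.

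\emph{Main obstacle.} The odd case is the crux, and it rests entirely on the numerical inequality $\tfrac12\cdot\tfrac{8}{15}=\tfrac{4}{15}>\tfrac14$, i.e.\ that even the smallest possible value of $g$'s leading coefficient — attained exactly when the two least primes are $3$ and $5$ — strictly exceeds the leading coefficient $1/4$ of $h$. The delicate point is thus not the leading order but guaranteeing that the gap $n/60$ genuinely dominates $\nu(n)+\phi((n/4,n/2);n)$, which is precisely why one cannot argue over all odd $n$ and must force $\phi(n)/n\to 0$ by choosing a primorial. Everything else is elementary, the even case needing only $g(n)<n/4+2$, the inequality $\lfloor n/3\rfloor\ge n/4$, and the single explicit check at $n=8$.
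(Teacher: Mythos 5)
Your proof is correct, and both halves follow the same underlying strategy as the paper: compare the leading term of $g(n)$, namely $\tfrac n4(1-\tfrac1{p_2})+O(1)$ for even $n$ and $\tfrac n2(1-\tfrac1{p_1})(1-\tfrac1{p_2})+2$ for the odd family, against the dominant term $\lfloor n/3\rfloor$ (resp.\ $\lfloor n/4\rfloor$) of $h(n)$. Your odd case is essentially the paper's argument made explicit: the paper takes $n_k$ a product of $k\ge3$ consecutive odd primes and merely requires $(1-\tfrac1{p_1})(1-\tfrac1{p_2})>\tfrac12$, which your concrete choice $p_1=3$, $p_2=5$ realizes via $\tfrac{8}{15}>\tfrac12$; you also replace the paper's limit computation (which uses $\phi((n/4,n/2);n)\sim\phi(n)/4$ from \cite[Lemma 2.3]{bp}) by the cruder but entirely sufficient bound $\phi((n/4,n/2);n)\le\phi(n)$ together with $\phi(n_k)/n_k\to0$ and $\nu(n_k)=o(n_k)$. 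In the even case you genuinely diverge in the bookkeeping: the paper checks $6\le n\le 22$ by direct computation and only for $n\ge24$ runs the chain $g(n)<\tfrac n4+2\le\tfrac n3\le h(n)$, whereas your split on $\nu(n)$ --- using $\nu(n)\ge2$ to get $h(n)\ge\lfloor n/3\rfloor+2\ge\tfrac n4+2>g(n)$, and treating $n=2^\alpha$ separately with the single boundary check $h(8)=4>3=g(8)$ supplied by $\phi((8/3,4);8)=1$ --- removes the finite verification entirely. That buys a more self-contained and checkable argument at the cost of a slightly longer case analysis; the mathematical content is otherwise identical.
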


\begin{proof} Let $n$ be even. If $n\leq 22$, direct computation  shows that $g(n)<h(n)$. If $n\geq 24$, we have  $g(n)\leq \frac{n}{4}(1-\frac{1}{p_2})+2<n/4+2\leq n/3\leq h(n).$

To construct an infinite family of natural numbers $n$ such that $h(n)<g(n),$ we proceed as follows.
Define $n_k=p_1p_2\cdots p_{k} $ to be the product of $k\geq 3$ consecutive  odd primes. We have $\lim_{k\rightarrow +\infty}\frac{\phi(n_k)}{n_k}=0$ as well as
$\lim_{k\rightarrow +\infty}\frac{\nu(n_k)}{n_k}=0$ and hence $\lim_{k\rightarrow +\infty}\frac{h(n_k)}{n_k}=\frac{1}{4}.$ On the other hand, we have $\lim_{k\rightarrow +\infty}\frac{g(n_k)}{n_k}= \frac{1}{2}(1-\frac{1}{p_1}) (1-\frac{1}{p_2})$. Choose now the  primes $p_1, p_2$ such that $(1-\frac{1}{p_1}) (1-\frac{1}{p_2})>\frac{1}{2}$. With this choice of $p_1,p_2$, we get  $\lim_{k\rightarrow +\infty}\frac{g(n_k)-h(n_k)}{n_k}>0$ and thus $h(n_k)<g(n_k)$  for an infinite number of $n_k$.

 \end{proof}

\vskip 1em
\subsection{The even degree case}\label{even-case} We focus now on the case $n$ even, trying to shed light on the Group-theoretic Question 2.
To appreciate what we are going to show, note that the canonical basic set $\delta_C$ in \eqref{basic-set} does not contain $P_2.$

\begin{prop}\label{even-new}  Let $n\in A$  be even. Then the set
\[\delta_E=\{P_x :1\leq x < n/2,\  2\mid x\}\cup \,\{S_{n/2}\wr S_{2}\}\cup \{A_n\}\]
is a special basic set of size $|\delta_E|=\left\lceil  \frac{n+4}{4}\right\rceil.$ Moreover, $g(n)\leq |\delta_E|$ with equality if and only if $n=2^{\alpha}$, for some $\alpha\in \mathbb{N}$ with $\alpha\geq 2,$ or $n=4q$, for some prime $q$.

\end{prop}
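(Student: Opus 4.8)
The plan is to verify, in order, that $\delta_E$ is a basic set, that it is moreover special, that $|\delta_E|=\lceil (n+4)/4\rceil$, and finally to compare $g(n)$ with this value; exactly as in the proofs of Propositions~\ref{gamma'-g} and \ref{newbound}, specialness will be reduced to the basic-set property.

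First I would settle the routine points. Because $n\ge 4$ is even, the component $P_2$ lies in $\delta_E$ as soon as $2<n/2$, i.e.\ for $n\ge 6$, while for $n=4$ we have $\delta_E=\{S_2\wr S_2,\,A_4\}$ and $S_2\wr S_2=S_{n/2}\wr S_2$ is a proper overgroup of $P_2$. Since every special metacyclic subgroup of $S_n$ is contained, up to conjugacy, in $P_2$ (introduction), and since any basic set already covers every cyclic subgroup, it suffices—as in Proposition~\ref{newbound}—to prove that $\delta_E$ is a basic set in order to conclude that it is special. The cardinality is then a direct count: the even integers $x$ with $1\le x<n/2$ number $\lfloor (n-2)/4\rfloor$, and adjoining the two components $S_{n/2}\wr S_2$ and $A_n$ gives $\lfloor (n-2)/4\rfloor+2=\lceil (n+4)/4\rceil$.

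The core of the argument is showing that every type $T=[x_1,\dots,x_k]\in\mathcal T(n)$ belongs to a component. Since $n$ is even, a permutation of type $T$ has sign $(-1)^{\,n-k}=(-1)^k$, so every type with $k$ even lies in $A_n$. For $k=1$ the $n$-cycle lies in a conjugate of $S_{n/2}\wr S_2$ by Corollary~\ref{wreathbis} with $b=n/2$. There remains the delicate case $k\ge 3$ odd. Here the terms summing to the even number $n$ force an even number of odd terms, hence an odd, in particular positive, number of even terms. If some even term $x_i$ satisfies $x_i<n/2$, then $x_i$ gives an $x_i$-cut of $T$, so $T\in P_{x_i}\in\delta_E$. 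Otherwise every even term is $\ge n/2$; but two even terms, each $\ge n/2$, would already exhaust all $n$ points and leave nothing for the remaining $k-2\ge 1$ terms, so there is a unique even term $x_i\ge n/2$, accompanied by $k-1\ge 2$ odd terms summing to $n-x_i\le n/2$. Choosing two odd terms $x_a,x_b$, their sum is even and is $<n/2$ \emph{except} when there are exactly two odd terms and $x_i=n/2$; in the former situation $P_{x_a+x_b}\in\delta_E$ covers $T$, and in the latter $T=[n/2,x_a,x_b]$ with $x_a+x_b=n/2$ lies in $S_{n/2}\wr S_2$ (place an $(n/2)$-cycle on one block and a permutation of type $[x_a,x_b]$ on the other). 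This routing of the single exceptional family $[n/2,x_a,x_b]$ through the imprimitive component, rather than through some intransitive $P_x$, is the one genuinely delicate point I expect.

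Finally, for the comparison I would substitute $p_1=2$ into Definition~\ref{g} and match its branches against $\lceil (n+4)/4\rceil$, treating $n\equiv 0$ and $n\equiv 2\pmod 4$ separately to handle the ceiling. The branch $\nu(n)=1$ gives $g(n)=n/4+1=|\delta_E|$, hence equality for every $n=2^{\alpha}$. In the ``otherwise'' branch $g(n)=\tfrac n4\bigl(1-\tfrac1{p_2}\bigr)+2$; when $4\mid n$ the difference $g(n)-|\delta_E|$ equals $1-\tfrac{n}{4p_2}$, and since $4\mid n$ and $p_2\mid n$ force $n\ge 4p_2$, one obtains $g(n)\le|\delta_E|$ with equality precisely when $n=4p_2=4q$; when $4\nmid n$ the analogous difference is strictly negative. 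The remaining branch $\nu(n)=2$, $(\alpha_1,\alpha_2)=(1,1)$, i.e.\ $n=2p_2$, likewise yields $g(n)<|\delta_E|$ by direct evaluation. Assembling the cases gives $g(n)\le|\delta_E|$ with equality exactly for $n=2^{\alpha}$ ($\alpha\ge 2$) and for $n=4q$ ($q$ prime), the value $n=8$ being absorbed consistently into the $2^{\alpha}$ case.
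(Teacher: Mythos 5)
Your proof is correct and follows essentially the same route as the paper's: reduce specialness to the basic-set property via $P_2$ (with the $n=4$ case going through $S_2\wr S_2$, an edge case you handle more explicitly than the paper does), sort the types by the parity of the number of parts, and compare $g(n)$ with $\lceil (n+4)/4\rceil$ branch by branch in Definition \ref{g}. The only divergence is minor: for an odd number of parts the paper simply cuts on $x=\min\{x_i,\,n-x_i\}$ for any even term $x_i\neq n/2$, which disposes of even terms exceeding $n/2$ at once and avoids your pairing of odd terms; both arguments are valid.
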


\begin{proof}   Let $n$ be even. We show that $\delta_E$ is a special basic set for $S_n.$ Since $P_2\in \delta_E,$  by Proposition \ref{prop:prop}, it is enough to show that $\delta_E$ is a basic set, that is, all the types of $S_n$ appear in some component of $\delta_E$.
By Corollary \ref{wreathbis}, the type $[n]$ belongs to $\in S_{n/2}\wr S_{2}$.  The types with an even number of parts belong instead to $A_n$. Let $T=[x_1,\dots,x_k]$ be a type with an odd number $k\in\mathbb{N}$ of parts. Then, there exists an even term $x_i$. If  $x_i\neq n/2,$ then consider $x=\min\{x_i,n-x_i\}<n/2$ and note that $T\in P_x.$
If instead the only even term is equal to $n/2$, then $T\in S_{n/2}\wr S_{2}$.
The size of $\delta_E$ is clear counting the even numbers in $[1,n/2)\cap \mathbb{N}.$

We show that $|\delta_E|\leq g(n)$, taking into account the cases of the definition  \eqref{g-definition}  of $g(n).$ Let first $4\mid n$, so that $|\delta_E|=\frac{n}{4}+1$. If $n=2^{\alpha}$ for some $\alpha\in \mathbb{N}$ with $\alpha\geq 2$ we have $g(n)=\frac{n}{4}+1$. If  $\nu(n)\geq2$ and $n=2^{\alpha_1}\cdots p_{\nu(n)}^{\alpha_{\nu(n)}}$, for some $\alpha_1\geq 2$ and $ \alpha_i\geq 1$ for $i\in\{1,\dots,\nu(n)\},$ then $g(n)=\frac{n}{4}(1-\frac{1}{p_2})+2\leq \frac{n}{4}+1.$ Moreover, it is immediately checked that $g(n)=\frac{n}{4}+1$ holds only if $n=4p_2.$ Consider next the case $4\nmid n$ so that $|\delta_E|=\frac{n+6}{4}$. Since $n\geq 4$, we have $\nu(n)\geq2$ and $n=2p_2^{\alpha_2}\cdots p_{\nu(n)}^{\alpha_{\nu(n)}}$, with $ \alpha_i\geq 1$ for $i\in\{1,\dots,\nu(n)\}.$
If $\nu(n)=2$ and $\alpha_2=1$, we have $n=2p_2$ and $g(n)=\frac{n+2}{4}=|\delta_E|-1<|\delta_E|.$ If instead $\alpha_2\geq 2$ or $\nu(n)\geq 3,$ we have that $\frac{n}{p_2}>2$ and thus $g(n)=\frac{n}{4}(1-\frac{1}{p_2})+2<\frac{n+6}{4}.$
 \end{proof}

\begin{cor}\label{application} Let $n\in A$  be even of the type $n=2^{\alpha}$, for some $\alpha\in \mathbb{N}$ with $\alpha\geq 2,$ or $n=4q$ for some prime  $q$. If $\gamma(S_n)=g(n),$ then there exists a minimal basic set of $S_n$ containing $P_2$ as a component.
\end{cor}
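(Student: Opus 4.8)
The plan is to take the very set $\delta_E$ produced in Proposition~\ref{even-new} and show that, under the present hypotheses, it is already a minimal basic set having $P_2$ among its components. First I would recall from Proposition~\ref{even-new} that for even $n$ the set
\[
\delta_E=\{P_x :1\leq x < n/2,\ 2\mid x\}\cup\{S_{n/2}\wr S_{2}\}\cup\{A_n\}
\]
is a basic set (in fact special), with $|\delta_E|=\lceil (n+4)/4\rceil$, and that $g(n)\le|\delta_E|$ with equality exactly when $n=2^{\alpha}$ ($\alpha\ge 2$) or $n=4q$ ($q$ prime). Since the hypothesis of the corollary places $n$ precisely in this equality family, I obtain $|\delta_E|=g(n)$.

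The second step is then a one-line comparison: the standing assumption $\gamma(S_n)=g(n)$ together with $|\delta_E|=g(n)$ gives $|\delta_E|=\gamma(S_n)$, so that $\delta_E$, being a basic set whose cardinality equals the minimum $\gamma(S_n)$, is a \emph{minimal} basic set. The third step is to locate $P_2$ inside it: whenever $n>4$ one has $2<n/2$, so $x=2$ satisfies the conditions $1\le x<n/2$ and $2\mid x$, whence $P_2\in\delta_E$. Since in the family $n=4q$ one already has $n\ge 8$, the only value escaping this is $n=2^2=4$; thus for all $n\ge 8$ the set $\delta_E$ is the required minimal basic set containing $P_2$, and Corollary~B then also yields $r(S_n)=\gamma(S_n)$.

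The one genuinely delicate point — and the main obstacle — is the boundary value $n=4$, which formally meets the hypothesis ($\alpha=2$) but for which the index set $\{x:1\le x<2,\ 2\mid x\}$ is empty, so that $\delta_E=\{S_2\wr S_2,\ A_4\}$ does \emph{not} contain $P_2$. This is not a gap in the argument but a real feature of $S_4$: a size-$2$ basic set $\{P_2,K\}$ would force $K$ to contain both a $3$-cycle and a $4$-cycle, hence $12\mid|K|$, forcing $K=A_4$, which has no $4$-cycle, a contradiction. Hence for $n=4$ no minimal basic set literally contains $P_2$. I would therefore either read the corollary for $n>4$, where the three steps apply verbatim, or dispose of $n=4$ through the proper overgroup $S_2\wr S_2\supsetneq P_2$ that does occur in $\delta_E$: by the overgroup criterion recorded in the introduction (a minimal normal covering with component $P_2$ \emph{or} a proper overgroup of $P_2$ suffices, the latter arising for $n=4$ exactly through $S_2\wr S_2$), the intended consequence $r(S_4)=\gamma(S_4)$ still holds. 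So the substance of the proof is the clean three-step reduction, and the real care lies in isolating and correctly handling this single exceptional degree.
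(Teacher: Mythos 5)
Your argument is the same as the paper's: the paper's entire proof is that $\gamma(S_n)=g(n)=|\delta_E|$ forces the basic set $\delta_E$ of Proposition \ref{even-new} to be minimal, and that $P_2\in\delta_E$ ``by definition.'' So the core three-step reduction is exactly what is intended, and it is correct for all $n$ in the stated family with $n\geq 8$.

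Your caveat about $n=4$ is not pedantry but an actual correction to the paper. For $n=4$ the index set $\{x: 1\leq x<n/2,\ 2\mid x\}$ is empty, so $\delta_E=\{S_2\wr S_2,\ A_4\}$ and the paper's assertion that $P_2\in\delta_E$ fails; worse, your counting argument (a proper subgroup of $S_4$ containing both a $3$-cycle and a $4$-cycle would have order divisible by $12$, hence be $A_4$, which has no $4$-cycle) shows that \emph{no} minimal basic set of $S_4$ contains $P_2$, so the corollary as literally stated is false at $n=4$. This is consistent with the paper's own remark in the introduction that $P_2$ is not maximal in $S_4$ and that the relevant component there is the proper overgroup $S_2\wr S_2$, which does lie in $\delta_E$ and still yields the intended consequences $\gamma'(S_4)=\gamma(S_4)$ and $r(S_4)=\gamma(S_4)$. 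Your proposed fix --- either restricting the statement to $n\geq 8$ or replacing ``$P_2$'' by ``$P_2$ or a proper overgroup of $P_2$'' --- is exactly what is needed.
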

\begin{proof}  By $\gamma(S_n)=g(n),$ we get that the basic set $\delta_E$ in Proposition \ref{even-new} is minimal. Moreover, by definition, $P_2\in\delta_E.$
\end{proof}
Recall that we do not know any infinite family of even  $n$ such that $\gamma(S_n)=g(n).$ Thus the above result does not imply the existence of infinitely many even $n$ such that $P_2$ appears as a component in some minimal basic set.
Note that Proposition \ref{even-new} implies that Conjecture 3 in \cite{bps}, stating that  for each  minimal
basic set of $S_n$, with $\nu(n)\geq 2$, consisting of maximal components, the subset of intransitive
components is given by $\{P_x\in \mathcal{P}\ :  \  \gcd(x,p_1p_2)=1\}$, is generally false for $n$ even. Indeed we have $\gamma(S_{12})=4=g(12)$ and in the intransitive components $P_x\in\delta_E$, $x$ is even.

We close this section observing, as a further consequence of Proposition \ref{even-new}, that no kind of uniqueness seems possible for the minimal special basic sets in the even case. For instance the three sets of subgroups of $S_8$ given by
\[\delta_E=\{P_2, A_8, S_{4}\wr S_{2} \},\quad \delta_C=\{P_1, P_3, S_{4}\wr S_{2}\}, \quad \delta=\{P_1, A_8, S_{2}\wr S_{4}\}\]
are all minimal special basic sets for $S_8.$  This follows from $\gamma(S_8)=3=g(8)$, using Corollary \ref{equality}  for  $\delta_C$ and using  Proposition \ref{even-new} for  $\delta_E.$ For the set $\delta$ the check is easily carried on by the usual arguments.

\vskip 1cm

\section{Linear bounds for $r(S_n)$ and $\gamma'(S_n)$ }\label{6}
In this last section we derive for the parameter $r(S_n)$ and $\gamma'(S_n)$, the same linear bounds \eqref{linear} known for $\gamma(S_n)$.
\begin{prop}\label{bounds}  Let $n\in\mathbb{N}, n\geq 3.$ Then there exists a positive constant $k$ such that $kn\leq r(S_n)\leq \gamma'(S_n)\leq 2n/3$.
\end{prop}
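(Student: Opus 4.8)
The plan is to establish the three inequalities in the chain $kn \leq r(S_n) \leq \gamma'(S_n) \leq 2n/3$ separately, leveraging the bounds already proved in the excerpt. The middle inequality $r(S_n) \leq \gamma'(S_n)$ is immediate from Corollary B, which gives $\gamma(S_n) \leq r(S_n) \leq \gamma'(S_n)$ for all $n \geq 3$. So the real work lies in the two outer inequalities, both of which should follow by combining the known linear bounds for $\gamma(S_n)$ in \eqref{linear} with the relationship between $\gamma(S_n)$ and $\gamma'(S_n)$.

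For the lower bound $kn \leq r(S_n)$, I would simply observe that Corollary B gives $\gamma(S_n) \leq r(S_n)$, and the left-hand inequality in \eqref{linear} states that $kn \leq \gamma(S_n)$ for some positive constant $k$ (with $0 < k \leq 2/3$) and all $n \geq 3$. Chaining these yields $kn \leq \gamma(S_n) \leq r(S_n)$ directly, with no additional argument needed. This handles the lower bound for the entire range $n \geq 3$ at once.

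For the upper bound $\gamma'(S_n) \leq 2n/3$, the natural approach is to compare the special normal covering number $\gamma'(S_n)$ against the bounds $g(n)$ and $h(n)$ established in Proposition \ref{gamma'-g} and Corollary \ref{h-bound}, and show that at least one of these does not exceed $2n/3$. The cleanest route is via $g(n)$: from Definition \ref{g}, in every case $g(n) \leq \frac{n}{2}(1 - \frac{1}{p_1}) + 2$ (or the two-prime variant), and for $n$ large this is comfortably below $2n/3$; the finitely many small $n$ can be checked by direct computation, possibly invoking the asymptotic comparison already worked out in Proposition \ref{comparison}. For $n$ odd, the $h$-bound of Corollary \ref{h-bound}, which is asymptotically $n/4 + \nu(n) + \phi(n)/4$, gives an even more generous margin below $2n/3$. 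I would split into the cases $n$ even and $n$ odd, apply $\gamma'(S_n) \leq g(n)$ or $\gamma'(S_n) \leq h(n)$ as appropriate, and verify the numerical inequality against $2n/3$.

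The main obstacle I anticipate is the verification for small values of $n$, where the additive constants in $g(n)$ and $h(n)$ (the $+1$ and $+2$ terms, plus $\nu(n)$) are not yet negligible relative to the linear term, so the asymptotic estimate $g(n) < 2n/3$ does not immediately close the gap. These small cases will require either direct tabulation of the known values of $\gamma(S_n)$ (for instance via \cite[Table 1]{bps}) or a careful case-by-case bound accounting for the smallest prime factor $p_1$; in particular, when $p_1 = 2$ the factor $\frac{1}{2}(1-\frac{1}{p_1}) = \frac{1}{4}$ already gives ample room, whereas the odd prime powers near the lower end of the range deserve a separate check. Once the boundary cases are dispatched, the general inequality follows routinely from the explicit formulas for $g$ and $h$ together with the elementary estimate $\frac{1}{2}(1 - \frac{1}{p_1}) \leq \frac{2}{3}$.
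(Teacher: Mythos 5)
Your proposal is correct and follows essentially the same route as the paper: the lower bound by chaining $kn\leq\gamma(S_n)\leq r(S_n)$ via Corollary B and \eqref{linear}, and the upper bound via $\gamma'(S_n)\leq g(n)$ together with the crude estimate $g(n)<\frac{n}{2}+2$ for large $n$ and a direct check of the small cases (the paper uses $g(n)\leq\frac{n+3}{2}\leq 2n/3$ for $n\geq 9$ and \cite[Table 1]{bps} for $4\leq n\leq 8$, plus \eqref{3} for $n=3$). The only cosmetic difference is your optional appeal to the $h$-bound for odd $n$, which the paper does not need.
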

\begin{proof}  If $n=3$, we use \eqref{3} and $k\leq 2/3.$
Let  $n\geq 4.$ By Proposition \ref{prop:prop} and Proposition \ref{gamma'-g}, we know that  $$\gamma(S_n)\leq r(S_n)\leq
 \gamma'(S_n) \leq g(n).$$ Since, by \cite[Theorem 1.1] {JA}, $\gamma(S_n)\geq kn,$ we immediately derive the lower bound $kn\leq r(S_n)\leq \gamma'(S_n).$ To deal with the upper bound, consider the function $g(n)$. Since the function $f(x)=1-1/x$ is increasing for $x>0$, we have that $g(n)<n/2+2$, and  $g(n)$ being an integer says $g(n)\leq \frac{n+3}{2}.$ If $n\geq 9$,  we note that $\frac{n+3}{2}\leq 2n/3$.  The cases $4\leq n\leq 8$ are all included in \cite[Table 1]{bps}  and realise
 $\gamma (S_n)=g(n)$, so that by Corollary \ref{equality} and  \cite[Theorem 1.1] {JA}, we also have $\gamma'(S_n)=\gamma(S_n)\leq 2n/3$.
\end{proof}
In conclusion we note that the  constant $k$  relies on certain number theoretic results (see \cite{BLS})  and that the known value of $k$ is unrealistically small
because of the many approximations needed first to obtain and next to apply those results. For instance, we know that for $n\geq 792,000$ even, $k=0.025$ works (\cite[Remark 6.5.]{JA}).
We still do not know optimum values for the constant $k$.
\section{
Acknowledgments}
The authors wish to thank Attila Mar\'oti for his permission to use his personal communication, which led to Section \ref{Maroti}.
The first author is supported by GNSAGA (INdAM).

%\section{The analysis of some degrees}
%\begin{cor}\label{some cases}  For all $\alpha\leq 5$, we have %$\gamma'(S_{2^{\alpha}})=\gamma(S_{2^{\alpha}})=2^{\alpha-2}+1.$
%\end{cor}
%\begin{proof} Since $g(n)=2^{\alpha-2}+1$, the statement follows from \cite[Table 1]{bp} for $\alpha\leq 3.$ We then %need only to deal with $n=16$ and $n=32$...
%\end{proof}
%{\bf I think that we have hope to write down a proof, but it surely involves tedious calculation and at least two %pages...do you think that it could be expressive? May be we could skip this section if you think that we have enough %meat...}  Yes I think we can skip this section.

\bibliographystyle{plain}

\def\cprime{$'$}

\end{document}